\newcommand{\szkielet}{\draw[->] (0,0)--(6,0);  \draw[->] (0,0)--(0,5);
 \foreach \position in {(0,0),(1,0),(0,1),(2,0),(1,1),(0,2),(3,0),(2,1),(1,2),(0,3),(4,0),(3,1),(2,2),(1,3),(0,4),(5,0),(4,1),(3,2),(2,3),(1,4),(0,5)}  \fill \position circle (1.5pt);}
\newcommand{\szkiel}{\draw[->] (0,0)--(5,0);  \draw[->] (0,0)--(0,5);
 \foreach \position in {(0,0),(1,0),(0,1),(2,0),(1,1),(0,2),(3,0),(2,1),(1,2),(0,3),(4,0),(3,1),(2,2),(1,3),(0,4),(4,1),(3,2),(2,3),(1,4),(0,5)}  \fill \position circle (1.5pt);}
\newcommand{\Rn}{\mathbb{R}}
\newcommand{\Nn}{\mathbb{N}}
\newcommand{\Jac}{\mathop{\mathrm{Jac}}}
\newcommand{\supp}{\mathop{\mathrm{supp}}}
\newcommand{\conv}{\mathop{\mathrm{conv}}}
\newtheorem{Lemma}{Lemma}
\newtheorem{Theorem}{Theorem}
\newtheorem{Cor}{Corollary}
\newenvironment{proof}[1][Proof]{\textbf{#1.} }{\
\rule{0.5em}{0.5em}}
\begin{document}
\author{Janusz Gwoździewicz}
\title{Real Jacobian pairs with components of low degrees}
\maketitle

\begin{abstract}
We prove that every polynomial map $(f,g):\Rn^2\to\Rn^2$ with nowhere vanishing Jacobian 
such that $\deg f\leq 5$,  $\deg g \leq 6$ is injective.
\end{abstract}

\section{Introduction}

\renewcommand{\thefootnote}{}
\footnotetext{
     \noindent   \begin{minipage}[t]{4.5in}
       {\small  2010 \emph{Mathematics Subject Classification}:
       Primary 14R15.\\
       \emph{Key words and phrases}: real Jacobian conjecture, Newton polygon.}
       \end{minipage}}

Let $F=(f,g):\Rn^2\to\Rn^2$ be a polynomial map such that the Jacobi determinant 
$\Jac(f,g)=\frac{\partial f}{\partial x}\frac{\partial g}{\partial y}-
\frac{\partial f}{\partial y}\frac{\partial g}{\partial x}$ 
is a nowhere vanishing polynomial.  
The \emph{real Jacobian conjecture} states that $F$ is injective
and thus in view of~\cite{BB-R} a bijective polynomial mapping.

The real Jacobian conjecture was disproved by Pinchuk in~\cite{P}.
He constructed  a polynomial map $F:\Rn^2\to\Rn^2$ 
with everywhere positive jacobian which is not injective. The components of 
Pinchuk's mapping have degrees 10 and 40. Later on it was observed that a simple
modification of Pinchuk's counterexample decreases degrees of  components to 10 and 35. 
Up to now nobody found a counterexample to the real Jacobian conjecture with smaller 
degrees of components. 
Hence it is natural to ask a question: Under what additional assumptions on 
degrees the real Jacobian conjecture remains true?

In \cite{Gw1} it was proved that the real Jacobian conjecture is true under the assumption 
$\deg f\leq 3$, $\deg g\leq 3$.
Braun and Santos \cite{BF} generalized this result showing that it is enough to assume 
that  $\deg f\leq 3$ while degree of $g$ can be arbitrary.  
In~\cite{BO} Braun and  Or\'efice-Okamoto proved, that it is enough to assume $\deg f\leq 4$. 

In the present paper we continue this line of research. Its main result is

\begin{Theorem}\label{Tw:main}
Every polynomial map $(f,g):\Rn^2\to\Rn^2$ with nowhere vanishing Jacobian 
such that $\deg f\leq5$, $\deg g\leq 6$ is injective.
\end{Theorem}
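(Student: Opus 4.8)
The plan is to combine the degree bound with the sign condition on the Jacobian to pin down the leading forms of $f$ and $g$, and then to analyse the fibres of $f$ near infinity by means of their Newton polygons. First I would record two standing facts. Since $\Jac(f,g)$ never vanishes, neither does $\nabla f$ nor $\nabla g$, so $f$ and $g$ are submersions and their level curves give two everywhere transverse foliations of $\Rn^2$ by properly embedded lines. On each connected component $C$ of a fibre $f^{-1}(c)$ the function $g$ is strictly monotone, so $g(C)$ is an open interval; consequently $F$ is injective if and only if, for every $c$, the intervals $g(C)$ attached to the distinct components $C$ of $f^{-1}(c)$ are pairwise disjoint. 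The endpoints of these intervals are the limits of $g$ along the ends of the components, so the whole obstruction to injectivity lives at infinity.

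Next, since injectivity is symmetric in $f$ and $g$, the theorem of~\cite{BO} yields injectivity whenever $\min(\deg f,\deg g)\le4$; I am therefore reduced to $\deg f=5$ with $\deg g\in\{5,6\}$. The key leverage is a parity observation: a nowhere vanishing polynomial has constant sign, so its top homogeneous form is semidefinite, whereas every homogeneous form of odd degree in two variables changes sign. When $(\deg f,\deg g)=(5,6)$ the candidate top form $\Jac(f_5,g_6)$ has degree $9$ and therefore must vanish identically; for homogeneous forms in two variables this forces $g_6^{\,5}=c\,f_5^{\,6}$, hence $f_5=\alpha\,\ell^5$ and $g_6=\beta\,\ell^6$ for a common real linear form $\ell$, which a linear change of coordinates normalises to $\ell=x$. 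When $(\deg f,\deg g)=(5,5)$ the same computation makes $f_5$ and $g_5$ proportional unless $\Jac(f_5,g_5)$ is a nonzero semidefinite octic; in the proportional case I subtract a multiple of $f$ from $g$ (a target automorphism preserving both the Jacobian and injectivity) to lower $\deg g$ below $5$ and finish by~\cite{BO}, leaving the definite-octic situation to be treated on its own.

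With the leading forms normalised to $x^5$ and $x^6$, the remaining work is a Newton polygon analysis. The branches at infinity of a generic fibre $f=\mathrm{const}$ all lie over the single vanishing direction $x=0$ of $f_5$, and are described by the edges of the Newton polygon of $f$ through the vertex $(5,0)$; a Puiseux expansion at infinity along each such edge yields the limiting value of $g$ at the corresponding end. Imposing that $\Jac(f,g)$ stays sign definite constrains the subleading coefficients of $f$ and $g$, since its genuine top form, of even degree at most $8$, must itself be semidefinite, and this cuts the list of admissible Newton polygons, drawn from the degree $\le5$ and degree $\le6$ diagrams, down to finitely many shapes. For each shape I would compute the finitely many asymptotic values of $g$ along the ends of the fibres of $f$ and check that no two ends belonging to different components of the same fibre are confluent, which by the first paragraph establishes that the $g$-ranges are disjoint and hence that $F$ is injective.

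The main obstacle is exactly this last, computational stage: keeping precise track of how the lower order terms of $f$ and $g$ feed into both the Puiseux branches at infinity and the sign of $\Jac(f,g)$, so that the confluence of asymptotic values can be excluded in every admissible configuration. The residual definite-octic subcase of $(\deg f,\deg g)=(5,5)$ is the other delicate point, since there the clean parity argument is unavailable and the leading forms must be analysed directly; I expect both difficulties to be manageable only because the bounds $\deg f\le5$, $\deg g\le6$ leave just a short list of Newton polygons to inspect.
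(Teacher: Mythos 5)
Your reduction to $\deg f=5$ via \cite{BO} matches the paper, and your parity observation for the $(5,6)$ case is a genuinely nice alternative: since $\Jac(f_5,g_6)$ has odd degree $9$, it must vanish, forcing $f_5=\alpha\ell^5$, $g_6=\beta\ell^6$; the paper instead factors a single linear form out of $f^{+}$ and kills the subcases $x^k\,\|\,f^{+}$, $k\le4$, with the vertex-parity Lemma~\ref{L:wierzcholki}, so your route arrives at the same normal form $f^{+}=x^5$ slightly more directly. Likewise, in the $(5,5)$ case with $\Jac(f_5,g_5)\equiv0$ your subtraction of a multiple of $f$ from $g$ is exactly the paper's case $\deg h=5$.

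The genuine gap is that everything after these reductions is only announced, not proved, and what remains is essentially the entire content of the paper. The $(5,5)$ case with $f_5,g_5$ non-proportional, which you defer as ``the residual definite-octic situation,'' is where the paper does all its work: it organizes the analysis not around the octic $\Jac(f_5,g_5)$ but around $h$, the product of multiple factors of $\gcd(f^{+},g^{+})$, and runs five cases $\deg h\in\{0,2,3,4,5\}$ through some forty Newton polygons, using the criterion that $(f,g)$ is typical iff all level sets of $f$ are connected (Lemma~\ref{L:poziomica}) together with the non-degeneracy criterion of Lemma~\ref{L:inf} and the sign-change Lemmas~\ref{L:znak}, \ref{L:wierzcholki}, \ref{L:hrc}. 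You give no argument that semidefiniteness of the top form of $\Jac(f,g)$ actually ``cuts the list down'' or how the surviving configurations are eliminated. Moreover, your proposed endgame --- computing the asymptotic values of $g$ along Puiseux branches at infinity of the fibres of $f$ and excluding confluence of ends --- is substantially harder to execute than the paper's method, which never evaluates $g$ along branches at all: connectivity of the fibres of $f$ alone suffices by Lemma~\ref{L:poziomica}, and where $f$ is degenerate on an outer edge the contradiction comes from a sign change of the Jacobian, not from asymptotics of $g$. The same objection applies to your treatment of the normalized case $f^{+}=x^5$, which in the paper is a separate case analysis (Theorem~\ref{Tw:deg}, polygons $D_{37}$--$D_{46}$) that your sketch does not reproduce. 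As written, the proposal is a plausible strategy with one good idea, but not a proof.
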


\section{Lemmas}

In this section we present results that allow to prove smoothly Theorem~\ref{Tw:main}. 
Most of them are taken from \cite{BO,Gw1,Gw2}.  
Only Lemmas~\ref{L:znak0}, \ref{L:znak} and~\ref{L:edge} are new. 

\medskip
Let $f$, $g\in \Rn[x,y]$. 
We will say that $(f,g)$ is a \textit{Jacobian pair} if its Jacobi determinant 
$\Jac(f,g):=\frac{\partial f}{\partial x}\frac{\partial g}{\partial y}-
\frac{\partial f}{\partial y}\frac{\partial g}{\partial x}$ 
is everywhere positive or everywhere negative. 
A jacobian pair $(f,g)$ will be called \textit{typical} if the map $(f,g):\Rn^2\to\Rn^2$ is injective, 
otherwise it will be called an \textit{atypical jacobian pair}.


\begin{Lemma}\label{L:para}
Let $(\tilde f,\tilde g)=K\circ(f,g)\circ L$ for some invertible affine mappings 
 $L,K:\Rn^2\to \Rn^2$.
Then $(f,g)$ is a jacobian pair (resp.\ typical jacobian pair)  
if and only if $(\tilde f,\tilde g)$ is a jacobian pair (resp.\ typical jacobian pair).
\end{Lemma}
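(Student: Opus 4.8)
The plan is to reduce both assertions to the chain rule together with the elementary fact that an invertible affine map is a bijection of $\Rn^2$ whose derivative is a constant invertible matrix (its linear part). Writing $F=(f,g)$ and $\tilde F:=(\tilde f,\tilde g)=K\circ F\circ L$, I would first compute the Jacobian of $\tilde F$. Since $K$ and $L$ are affine, their derivatives $DK$ and $DL$ are constant matrices equal to their linear parts, both invertible because $K$ and $L$ are invertible. The chain rule gives $D\tilde F(p)=DK\cdot DF(L(p))\cdot DL$ for every $p\in\Rn^2$, so taking determinants yields
\[
\Jac(\tilde f,\tilde g)(p)=c\,\Jac(f,g)(L(p)),\qquad c:=\det(DK)\det(DL)\neq0.
\]

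Next I would transfer the sign condition through this pointwise identity. Because $L$ is a bijection of $\Rn^2$, the point $L(p)$ ranges over all of $\Rn^2$ as $p$ does, and $c$ is a nonzero constant; hence $\Jac(\tilde f,\tilde g)$ vanishes at some point exactly when $\Jac(f,g)$ does. As $\Rn^2$ is connected and the Jacobians are continuous (indeed polynomial), nowhere vanishing is equivalent to being everywhere of one sign, so $(f,g)$ is a jacobian pair precisely when $(\tilde f,\tilde g)$ is. The constant $c$ may reverse the overall sign, but this does not affect the defining condition.

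Finally, for the \emph{typical} case it remains to compare injectivity, since a typical jacobian pair is by definition a jacobian pair that is injective. As $K$ and $L$ are bijections, $\tilde F=K\circ F\circ L$ is a composition of injective maps whenever $F$ is, hence injective; conversely $F=K^{-1}\circ\tilde F\circ L^{-1}$ with $K^{-1}$ and $L^{-1}$ again invertible affine, so injectivity of $\tilde F$ forces injectivity of $F$. Combining this equivalence of injectivity with the jacobian-pair equivalence of the previous paragraph gives the statement for typical jacobian pairs. I expect no genuine obstacle here; the only step requiring care is invoking the surjectivity of $L$, which is exactly what promotes the pointwise identity above into a statement about the sign of $\Jac(f,g)$ on all of $\Rn^2$.
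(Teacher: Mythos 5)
Your proof is correct and is precisely the routine chain-rule-plus-bijectivity argument that the paper omits, stating only ``The proof is obvious.'' Nothing to add; the identity $\Jac(\tilde f,\tilde g)(p)=\det(DK)\det(DL)\,\Jac(f,g)(L(p))$ together with surjectivity of $L$ and the two-sided composition with $K^{\pm1},L^{\pm1}$ for injectivity is exactly what the author had in mind.
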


The proof is obvious. 

The proofs of the following well-known result can be found for example in~\cite[Lemma~1.2]{BO} 
or~\cite[Lemma 1]{Gw1}. 

\begin{Lemma}\label{L:poziomica}
Let $(f,g)$ be a jacobian pair. Then $(f,g)$ is a typical jacobian pair if and only if 
all level sets of $f$ are connected. 
\end{Lemma}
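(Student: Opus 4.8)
The plan is to exploit that the nowhere vanishing Jacobian makes $F=(f,g)$ a local diffeomorphism and, in particular, forces $\nabla f=(f_x,f_y)\neq 0$ everywhere (otherwise $\Jac(f,g)$ would vanish). Hence $f$ is a submersion and every level set $f^{-1}(c)$ is a smooth $1$-dimensional submanifold of $\Rn^2$. First I would record the key local computation: a unit-speed parametrization of a level curve of $f$ has velocity proportional to the tangent field $(-f_y,f_x)$, and the derivative of $g$ along it is $\nabla g\cdot(-f_y,f_x)=f_xg_y-f_yg_x=\Jac(f,g)$, which is of constant sign. Thus $g$ is \emph{strictly monotone} along each connected component of each level set of $f$. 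Two consequences follow: no component can be a circle (a strictly monotone function cannot close up), so each component is diffeomorphic to $\Rn$; and $g$ restricts \emph{injectively} to each component.

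Next I would make the reduction that organizes both implications. If $F(p)=F(q)$, then $f(p)=f(q)=:c$, so $p$ and $q$ lie in the same level set $f^{-1}(c)$; collisions never occur between distinct fibers. Consequently $F$ is injective if and only if $g$ is injective on every fiber $f^{-1}(c)$, i.e.\ if and only if, for each $c$, the ranges of $g$ on the distinct connected components of $f^{-1}(c)$ are pairwise disjoint. The implication ``connected level sets $\Rightarrow$ typical'' is now immediate: a connected fiber is a single component, on which $g$ is injective, so $F$ is injective.

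For the converse I would argue by contraposition: assuming some level set $f^{-1}(c_0)$ is disconnected, I would produce two points with the same image. Pick two components $C_1,C_2$ bounding a region $U$ that contains no further component of $f^{-1}(c_0)$, so $f-c_0$ has constant sign on the connected set $U$, say $f>c_0$. Since $f$ has no critical points it admits no interior extremum on $\overline U$, whence $U$ must be unbounded with $\sup_U f=+\infty$, while $f=c_0$ on $\partial U=C_1\cup C_2$ and $\nabla f$ points into $U$ there. The idea is then to follow the trajectories of the field $(g_y,-g_x)$ tangent to the level curves of $g$ (along which $f$ is strictly monotone): running such a trajectory in the direction of decreasing $f$ should carry an interior point down to a boundary point of $C_1$ or $C_2$ carrying the same $g$-value, forcing the $g$-ranges $J_1$ and $J_2$ of $C_1$ and $C_2$ to overlap. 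Any overlap yields $p\in C_1$, $q\in C_2$ with $g(p)=g(q)$ and $f(p)=f(q)=c_0$, hence $F(p)=F(q)$ with $p\neq q$, contradicting injectivity.

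I expect the main obstacle to be precisely the control of these trajectories at infinity: a level curve of $g$ inside the unbounded region $U$ could escape to infinity rather than return to $\partial U$, and ruling out this non-proper behavior is what actually forces the ranges to overlap. This is the genuine heart of the statement — it is exactly the failure of properness at infinity that, in Pinchuk's construction, simultaneously breaks injectivity and disconnects a level set — and it is this step that the cited proofs in \cite{BO} and \cite{Gw1} carry out in detail.
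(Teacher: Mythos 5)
Your first direction (``all level sets connected $\Rightarrow$ injective'') is complete and correct: the computation $\frac{d}{dt}g(\gamma(t))=\nabla g\cdot(-f_y,f_x)=\Jac(f,g)$ along the Hamiltonian flow of $f$ shows $g$ is strictly monotone, hence injective, on each connected fiber, and collisions can only occur inside a fiber. This is the standard argument and matches what the cited sources do.

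The converse, however, contains a genuine gap, and you have located it yourself: the whole content of that implication is precisely the claim that the $g$-ranges $J_1,J_2$ of two components of a disconnected fiber must overlap, and your plan to establish this by following level curves of $g$ back to $\partial U$ founders exactly where you say it does, on the possibility that such a trajectory escapes to infinity inside the unbounded region $U$ instead of returning to $C_1\cup C_2$. Deferring that step to \cite{BO} and \cite{Gw1} means the hard half of the lemma is not proved. Note also that the statement ``injective $\Rightarrow$ ranges disjoint is impossible'' is false for general injective local diffeomorphisms of the plane (the image $F(\Rn^2)$ may meet a vertical line in two disjoint intervals), so no purely local or flow-box argument can close the gap; some global input specific to polynomial maps is required. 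The efficient route, which the paper itself signals in the introduction, is the theorem of Bia{\l}ynicki-Birula and Rosenlicht \cite{BB-R}: an injective polynomial map $\Rn^2\to\Rn^2$ is bijective, hence (being an open map) a homeomorphism of $\Rn^2$, and then $f^{-1}(c)=F^{-1}(\{c\}\times\Rn)$ is the preimage of a line under a homeomorphism and therefore connected. Replacing your trajectory argument by this citation would complete the proof; as written, the proposal establishes only one implication.
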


Let $(f,g)$ be an atypical jacobian pair. Consider a pencil of polynomials 
$\{\lambda f+\mu g\}_ {(\lambda,\mu)\in\Rn^2\setminus\{0,0)\}}$.  
It follows from~Lemma~\ref{L:para} that any two linearly independent polynomials of this 
pencil constitute an atypical jacobian pair.  By Lemma~\ref{L:poziomica} we get:

\begin{Cor}\label{W:1}
Let $(f,g)$ be an atypical jacobian pair. Then every polynomial $\lambda f+\mu g$ 
for $(\lambda,\mu)\neq (0,0)$ does not have critical points and has at least one 
disconnected level set. 
\end{Cor}

In order to state subsequent lemmas we need a few notions.
Let $f=\sum_{(i,j)} a_{ij}x^iy^j$ be a nonzero polynomial. 
By a \textit{support} $\supp(f)$ we mean the set of monomials 
that appear in $f$ with nonzero coefficient. We call the set
$$\Delta_f=\conv(\{\,(i,j)\in\Nn^2: a_{ij}\neq 0\})$$
the \textit{Newton polygon} of $f$. Here $\conv(A)$ denotes the convex hull of a set $A$.

For a compact subset $\Delta$ of $\Rn^2$ and $\xi\in\Rn^2$ we define 
$l(\Delta,\xi)=\max_{\alpha\in \Delta} \langle \xi,\alpha \rangle$ and 
$\Delta^{\xi}=\{\,\alpha\in \Delta: \langle \xi,\alpha \rangle = l(\Delta,\xi)\,\}$. 

For any subset  $E$ of $\Rn^2$ we call the polynomial $f|_E=\sum_{(i,j)\in E} a_{ij}x^iy^j$ 
the \textit{symbolic restriction} of $f$ to $E$.  
If $\xi\in\Rn^2$ is a nonzero vector then $\Delta_f^{\xi}$ is a vertex or an edge of the polygon 
$\Delta_f$. In this case $f^{\xi}:=f|_{\Delta_f^{\xi}}$ is a quasi-homogeneous polynomial of 
weighted degree $\mathrm{w}(f)= l(\Delta_f,\xi)$ provided that $\mathrm{w}(x)=\xi_1$ and $\mathrm{w}(y)=\xi_2$.

\begin{Lemma}[\protect{\cite[Lemma~4]{Gw1}}]\label{L:leading}
Let $h\in\Rn[x,y]$ and $\xi$ be a nonzero vector of $\Rn^2$. 
If $h(x,y)\geq0$ for all $(x,y)\in\Rn^2$, then $h^{\xi}\geq0$ for all $(x,y)\in\Rn^2$. 
\end{Lemma}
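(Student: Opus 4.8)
The plan is to recover $h^{\xi}$ as a limit of rescalings of $h$ and then to transfer the sign condition across that limit. Write $d=l(\Delta_h,\xi)$ for the $\xi$-weighted degree of $h$ and expand $h=\sum_{(i,j)}a_{ij}x^iy^j$. The key observation is that the substitution $x\mapsto t^{\xi_1}x$, $y\mapsto t^{\xi_2}y$ multiplies each monomial $x^iy^j$ by $t^{\langle\xi,(i,j)\rangle}$. Since I intend to let $t$ range over positive reals, these powers are well defined for arbitrary real exponents $\xi_1,\xi_2$ (not necessarily integral or positive), which is exactly what makes the argument work for an arbitrary nonzero $\xi\in\Rn^2$.

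Concretely, for $t>0$ I would consider
\[
t^{-d}\,h(t^{\xi_1}x,\;t^{\xi_2}y)=\sum_{(i,j)}a_{ij}\,t^{\langle\xi,(i,j)\rangle-d}\,x^iy^j .
\]
By the definition of $d=l(\Delta_h,\xi)$, every exponent $\langle\xi,(i,j)\rangle-d$ is nonpositive, and it equals $0$ precisely for the monomials lying on the face $\Delta_h^{\xi}$. Hence, for each fixed $(x,y)$, letting $t\to+\infty$ annihilates every term with $\langle\xi,(i,j)\rangle<d$ and leaves exactly the monomials supported on $\Delta_h^{\xi}$, so that
\[
\lim_{t\to+\infty} t^{-d}\,h(t^{\xi_1}x,\;t^{\xi_2}y)=h^{\xi}(x,y).
\]

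The sign conclusion then follows at once. By hypothesis $h\geq0$ on all of $\Rn^2$, and since $(t^{\xi_1}x,\,t^{\xi_2}y)\in\Rn^2$ for every $t>0$, we have $h(t^{\xi_1}x,\,t^{\xi_2}y)\geq0$; multiplying by $t^{-d}>0$ preserves nonnegativity, and nonnegativity passes to the limit. Therefore $h^{\xi}(x,y)\geq0$ for every $(x,y)\in\Rn^2$, as desired.

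I do not expect a genuine obstacle here; the only delicate points are bookkeeping ones. First, one must use the real parameter $t>0$ rather than an integer, so that $t^{\xi_1}$ and $t^{\xi_2}$ are meaningful for real weights and the rescaled point remains in $\Rn^2$ even when $x$ or $y$ is negative. Second, one should observe that the displayed sum is finite, so the limit may be computed termwise and no uniformity is required: it is taken pointwise for each fixed $(x,y)$, which is all that the statement asks for.
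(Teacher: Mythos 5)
Your proof is correct; the paper itself does not prove this lemma but only cites \cite[Lemma~4]{Gw1}, and your rescaling argument $t^{-d}h(t^{\xi_1}x,t^{\xi_2}y)\to h^{\xi}(x,y)$ as $t\to+\infty$ is exactly the standard proof given there. The bookkeeping points you flag (taking $t>0$ real so that $t^{\xi_1},t^{\xi_2}$ are defined for arbitrary real weights, and the termwise pointwise limit of a finite sum) are handled correctly.
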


For any polynomial $h\in\Rn[x,y]$ we will say that $h$ \textit{changes sign} 
if $h$ attains both positive and negative values. 

\begin{Lemma}\label{L:znak0}
Let $f=x\cdot F(xy)$, $g=y\cdot G(xy)$, where $F(t)$, $G(t)$  are univariate polynomials with real 
coefficients. If the polynomial $F(t)G(t)$ has a nonzero real root, then $\Jac(f,g)$ changes sign. 
\end{Lemma}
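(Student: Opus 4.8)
The plan is to reduce the two-variable problem to a single-variable one and then invoke an elementary monotonicity argument. First I would compute the Jacobian directly by the chain rule. Writing $t=xy$ and differentiating $f=xF(xy)$, $g=yG(xy)$ gives $f_x=F(t)+tF'(t)$, $f_y=x^2F'(t)$, $g_x=y^2G'(t)$, and $g_y=G(t)+tG'(t)$. Since $x^2y^2=t^2$, the cross terms $x^2F'(t)\cdot y^2G'(t)$ and $t^2F'(t)G'(t)$ appearing in the expansion of $f_xg_y-f_yg_x$ cancel, leaving the clean identity
\[
\Jac(f,g)=F(t)G(t)+t\frac{d}{dt}\bigl(F(t)G(t)\bigr)
=\frac{d}{dt}\bigl(t\,F(t)G(t)\bigr)\Big|_{t=xy}.
\]
Setting $P(t)=t\,F(t)G(t)$, this reads $\Jac(f,g)=P'(xy)$.

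The second step is to observe that the map $(x,y)\mapsto xy$ is surjective onto $\Rn$, so the set of values attained by $\Jac(f,g)$ on $\Rn^2$ coincides with the set of values attained by $P'$ on $\Rn$. Consequently $\Jac(f,g)$ changes sign if and only if $P'$ takes both a positive and a negative value on the real line.

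I would then argue by contradiction. We may assume $F\not\equiv0$ and $G\not\equiv0$ (otherwise $\Jac(f,g)$ vanishes identically, which is excluded), so $P$ is a nonconstant polynomial. If $P'$ did not change sign, then $P'\ge0$ on all of $\Rn$ or $P'\le0$ on all of $\Rn$; in either case $P$ is monotone. A nonconstant monotone polynomial is in fact strictly monotone, since it cannot be constant on any interval, hence it is injective and has at most one real root. But $P(0)=0$ always holds, and if $t_0\ne0$ is a real root of $F(t)G(t)$ then $P(t_0)=t_0F(t_0)G(t_0)=0$ as well. Thus $P$ would have two distinct real roots $0$ and $t_0$, a contradiction. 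Therefore $P'$, and hence $\Jac(f,g)$, changes sign.

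The computation in the first step is routine; the only point requiring genuine care is the passage in the second step from ``does not change sign as a function of $(x,y)$'' to ``does not change sign as a function of the single variable $t$'', which is exactly where the surjectivity of $(x,y)\mapsto xy$ onto $\Rn$ is used. Once that reduction is in place, the conclusion rests only on the elementary fact that a monotone real polynomial cannot possess two distinct roots.
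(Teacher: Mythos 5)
Your proof is correct and follows essentially the same route as the paper: both reduce to the identity $\Jac(f,g)=\bigl(t\,F(t)G(t)\bigr)'\big|_{t=xy}$ and then invoke the fact that the derivative of a polynomial with two distinct real roots ($0$ and the nonzero root of $FG$) must change sign. You merely write out the chain-rule computation and the monotonicity argument that the paper compresses into ``it is easy to see'' and ``the statement follows from the fact that\dots''.
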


\begin{proof} 
It is easy to see that $\Jac(f,g)=H'(xy)$ for $H(t)=tF(t)G(t)$. 
The statement follows from the fact that the derivative of a polynomial 
having two different real roots changes sign. 
\end{proof}

\begin{Lemma}\label{L:znak}
Let $f$, $g\in \Rn[x,y]$ and $\xi=(-1,1)$. 
Assume that $f^{\xi}=y(xy-b)^2$ with $b\neq0$ and $l(\Delta_g, \xi)=-1$. 
Then $\Jac(f,g)$ changes sign. 
\end{Lemma}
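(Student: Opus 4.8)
The plan is to transport the sign change from the $\xi$-leading form of $\Jac(f,g)$ down to $\Jac(f,g)$ itself by means of Lemma~\ref{L:leading}, and to identify that leading form explicitly using Lemma~\ref{L:znak0}. First I would record the shapes of the two leading forms with respect to $\xi=(-1,1)$, i.e.\ with $\mathrm{w}(x)=-1$ and $\mathrm{w}(y)=1$. Since $f^{\xi}=y(xy-b)^2=y\,R(xy)$ with $R(t)=(t-b)^2$, it is quasi-homogeneous of weighted degree $1$. The hypothesis $l(\Delta_g,\xi)=-1$ forces every monomial $x^iy^j$ of $g^{\xi}$ to satisfy $-i+j=-1$, that is $j=i-1$ with $i\geq1$; hence $x^iy^{i-1}=x\,(xy)^{i-1}$ and $g^{\xi}=x\,P(xy)$ for a suitable univariate polynomial $P$.

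Next I would compute the $\xi$-leading form of $\Jac(f,g)$. Writing $f=f^{\xi}+f'$ and $g=g^{\xi}+g'$, where all monomials of $f'$ have weighted degree $<1$ and all monomials of $g'$ have weighted degree $<-1$, I expand $\Jac$ bilinearly into the four terms $\Jac(f^{\xi},g^{\xi})$, $\Jac(f^{\xi},g')$, $\Jac(f',g^{\xi})$ and $\Jac(f',g')$. The key arithmetic observation is that for $\xi=(-1,1)$ one has $\mathrm{w}(x)+\mathrm{w}(y)=0$, so the Jacobian of quasi-homogeneous polynomials of weighted degrees $d_1$ and $d_2$ is quasi-homogeneous of weighted degree $d_1+d_2$. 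Thus $\Jac(f^{\xi},g^{\xi})$ has weighted degree $1+(-1)=0$, while each of the remaining three terms has weighted degree strictly below $0$. Consequently $\big(\Jac(f,g)\big)^{\xi}=\Jac(f^{\xi},g^{\xi})$, provided the latter is a nonzero polynomial.

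It then remains to show that $\Jac(f^{\xi},g^{\xi})$ changes sign, which simultaneously guarantees it is nonzero. Here I would apply Lemma~\ref{L:znak0} to the pair $(g^{\xi},f^{\xi})=\big(x\,P(xy),\,y\,R(xy)\big)$, i.e.\ with $F=P$ and $G=R$. Since $R(t)=(t-b)^2$ vanishes at $t=b$ and $b\neq0$, the product $P(t)R(t)$ has the nonzero real root $b$, so Lemma~\ref{L:znak0} yields that $\Jac(g^{\xi},f^{\xi})$ changes sign; by antisymmetry $\Jac(f^{\xi},g^{\xi})=-\Jac(g^{\xi},f^{\xi})$ changes sign as well, and in particular it is nonzero. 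Combining this with the previous paragraph, $\big(\Jac(f,g)\big)^{\xi}$ changes sign. Finally, applying Lemma~\ref{L:leading} to both $\Jac(f,g)$ and $-\Jac(f,g)$ shows that a polynomial whose $\xi$-leading form changes sign must itself change sign, which completes the argument. I expect the main obstacle to be the second paragraph, namely the careful weighted-degree bookkeeping that isolates $\Jac(f^{\xi},g^{\xi})$ as the genuine leading form; the sign change itself is then delivered cleanly by Lemma~\ref{L:znak0} because the nonzero double root $b$ of $f^{\xi}$ survives in the product $PR$ regardless of the unknown factor $P$.
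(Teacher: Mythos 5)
Your proposal is correct and follows essentially the same route as the paper: identify $f^{\xi}$ and $g^{\xi}$ as having the shape required by Lemma~\ref{L:znak0} (up to swapping the roles of the two components, which only flips the sign of the Jacobian), conclude that $\Jac(f^{\xi},g^{\xi})$ changes sign and is therefore nonzero, observe that it is then the $\xi$-leading form of $\Jac(f,g)$, and finish with Lemma~\ref{L:leading}. The extra weighted-degree bookkeeping you supply is exactly the justification the paper leaves implicit in the line ``Since $\Jac(f^{\xi},g^{\xi})\neq0$, we have $\Jac(f,g)^{\xi}=\Jac(f^{\xi},g^{\xi})$.''
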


\begin{proof} 
The polynomials $f^{\xi}$ and $g^{\xi}$ satisfy the assumptions of Lemma~\ref{L:znak0}.
Hence $\Jac(f^{\xi},g^{\xi})$ changes sign.
Since $\Jac(f^{\xi},g^{\xi})\neq0$, we have $\Jac(f,g)^{\xi}=\Jac(f^{\xi},g^{\xi})$. 
Applying Lemma~\ref{L:leading} to $\Jac(f,g)$ finishes the proof. 
\end{proof}

\begin{Lemma}[\protect{\cite[Corollary~2]{Gw1}}]\label{L:wierzcholki}
Let $f$, $g\in \Rn[x,y]$ and $\xi\in\Rn^2$. 
Assume that $\Delta_f^{\xi}=\{\alpha\}$, $\Delta_g^{\xi}=\{\beta\}$, where $\alpha$ and $\beta$ 
are linearly independent. If $\alpha+\beta$ has an even coordinate then $\Jac(f,g)$ changes sign. 
\end{Lemma}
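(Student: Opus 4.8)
The plan is to follow the same pattern as the proof of Lemma~\ref{L:znak}: first identify the leading part $\Jac(f,g)^{\xi}$ with $\Jac(f^{\xi},g^{\xi})$, then show the latter changes sign, and finally invoke Lemma~\ref{L:leading}. Since $\Delta_f^{\xi}=\{\alpha\}$ and $\Delta_g^{\xi}=\{\beta\}$ are single vertices, the symbolic restrictions are monomials $f^{\xi}=a\,x^{\alpha_1}y^{\alpha_2}$ and $g^{\xi}=b\,x^{\beta_1}y^{\beta_2}$ with $a,b\neq0$, where $\alpha=(\alpha_1,\alpha_2)$ and $\beta=(\beta_1,\beta_2)$.

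First I would compute the Jacobian of these two monomials directly. A short calculation gives
$$\Jac(f^{\xi},g^{\xi})=ab(\alpha_1\beta_2-\alpha_2\beta_1)\,x^{\alpha_1+\beta_1-1}\,y^{\alpha_2+\beta_2-1}.$$
Because $\alpha$ and $\beta$ are linearly independent, the determinant $\alpha_1\beta_2-\alpha_2\beta_1$ is nonzero, so $\Jac(f^{\xi},g^{\xi})$ is a nonzero monomial. Linear independence also rules out $\alpha_1=\beta_1=0$ and $\alpha_2=\beta_2=0$, so both exponents $\alpha_1+\beta_1-1$ and $\alpha_2+\beta_2-1$ are nonnegative and this expression is a genuine polynomial.

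Next I would use the parity hypothesis. If $\alpha_1+\beta_1$ is even then the exponent of $x$ is odd; if $\alpha_2+\beta_2$ is even then the exponent of $y$ is odd. In either case the monomial $\Jac(f^{\xi},g^{\xi})$ carries a variable raised to an odd power, hence it takes opposite values at the points $(\pm1,1)$ or $(1,\pm1)$ and therefore changes sign.

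Finally, since $\Jac(f^{\xi},g^{\xi})\neq0$, there is no cancellation in the top weighted degree and we have $\Jac(f,g)^{\xi}=\Jac(f^{\xi},g^{\xi})$, exactly as in Lemma~\ref{L:znak}. Arguing by contradiction, if $\Jac(f,g)$ did not change sign then $\pm\Jac(f,g)\geq0$ everywhere, and Lemma~\ref{L:leading} would force $\pm\Jac(f,g)^{\xi}\geq0$, contradicting the fact that $\Jac(f,g)^{\xi}$ changes sign. Hence $\Jac(f,g)$ changes sign. The only step requiring care is the identity $\Jac(f,g)^{\xi}=\Jac(f^{\xi},g^{\xi})$, which rests on the observation that differentiation lowers the weighted degree by $\xi_1$ or $\xi_2$, so that the highest weighted-degree part of $\Jac(f,g)$ is produced solely by $f^{\xi}$ and $g^{\xi}$; this is the same fact already exploited in the preceding lemma, so no new difficulty arises.
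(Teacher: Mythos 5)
Your proof is correct and complete: the computation of $\Jac(f^{\xi},g^{\xi})$ as a single nonzero monomial with an odd exponent, the use of linear independence both to get a nonzero coefficient and to ensure the exponents are nonnegative, and the passage from $\Jac(f^{\xi},g^{\xi})$ to $\Jac(f,g)$ via Lemma~\ref{L:leading} are all sound. The paper itself gives no proof of this lemma (it is quoted from \cite{Gw1}), but your argument follows exactly the template the paper uses for Lemma~\ref{L:znak} and is the natural proof of the cited result.
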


We say that a nonzero polynomial $f\in\Rn[x,y]$ is \textit{degenerated} on an edge $E$ of 
its Newton polygon if $f|_E$ has a multiple factor which is not divisible by $x$ and $y$. 
Otherwise we say that $f$ is \textit{non-degenerated} on $E$. 
It is easy to check that  $f$ is non-degenerated on every edge $E$ that has no interior lattice points. 

Let $\Delta$ be a Newton polygon. Every edge of a form $\Delta^{\xi}$, where $\xi$ has 
at least one positive coordinate is called an \textit{outer edge} of $\Delta$. 

We say that a nonzero polynomial $f\in\Rn[x,y]$ is \textit{convenient} if some positive powers of
$x$ and $y$ belong to $\supp(f)$.  Observe that this is equivalent to $\Delta_f$ 
intercepting each axis in at least a point away from the origin.

\begin{Lemma}[\protect{\cite[Lemma 3.4]{BO}}]\label{L:inf}
Let $f\in\Rn[x,y]$ be a convenient polynomial without critical points.
If $f$ is non-degenerated on each outer edge of $\Delta_f$ then 
all level sets of $f$ are connected. 
\end{Lemma}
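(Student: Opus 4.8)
The plan is to exploit the fact that a polynomial without critical points is a submersion $f\colon\Rn^2\to\Rn$, so that each level set $f^{-1}(c)$ is a smooth $1$-dimensional submanifold, and then to promote this to a statement about the global topology of the map $f$. The key reduction is the following. Suppose one can show that $f$ has no \emph{atypical values} — that is, $f$ restricts to a locally trivial fibration over all of $\Rn$. Then, because the base $\Rn$ is contractible, this fibration is trivial, so $\Rn^2$ is diffeomorphic to $F\times\Rn$, where $F$ denotes a fibre. Since $\Rn^2$ is connected and $\Rn$ is connected, $F$ must be connected; hence every level set $f^{-1}(c)$, being diffeomorphic to $F$, is connected. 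Thus the whole lemma reduces to showing that $f$ has no bifurcation values.

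As $f$ has no critical points, the only possible atypical values are critical values \emph{at infinity}, so the task is to control the behaviour of the curves $f^{-1}(c)$ near infinity uniformly in $c$. This is exactly what the Newton polygon hypotheses are designed to do. First I would pass to the toric compactification $X$ of $\Rn^2$ associated to $\Delta_f$: its boundary $X\setminus\Rn^2$ is a union of curves, one for each edge of $\Delta_f$ together with the two coordinate directions, and the outer edges are precisely those boundary components on which the affine level curves can accumulate. Convenience of $f$ guarantees that $\Delta_f$ meets both axes, so no asymptotic direction is missing and every branch of $f^{-1}(c)$ escaping to infinity is visible on one of the outer-edge boundary components. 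On such a component the way the closure of $f^{-1}(c)$ meets the divisor is governed by the symbolic restriction $f^{\xi}$ (a quasi-homogeneous polynomial, as recorded just before Lemma~\ref{L:leading}): the intersection points correspond to the real roots of $f^{\xi}$ along the corresponding orbit, and their multiplicities to the multiplicities of those roots.

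The heart of the argument, and the step I expect to be the main obstacle, is to use non-degeneracy on each outer edge to show that these intersection points are simple and do not depend on $c$. Non-degeneracy means that $f^{\xi}$ has no multiple factor other than powers of $x$ and $y$; translated to the boundary divisor, this says that $\overline{f^{-1}(c)}$ is smooth along the divisor and meets it transversally, in a finite set of points that does not move as $c$ varies, since the leading quasi-homogeneous part $f^{\xi}$ does not involve $c$. Because there are no critical points in the affine part and no tangencies or coalescing branches at infinity, the family $\{\overline{f^{-1}(c)}\}_{c}$ is topologically trivial near the boundary; combined with submersivity in the affine part, this yields that $f$ is a locally trivial fibration over all of $\Rn$, i.e.\ has no atypical values. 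Making the transversality-at-infinity computation precise — in particular verifying that a multiple factor of $f^{\xi}$ is exactly what would produce a singular or tangential point of the closed curve on the divisor, and that its absence prevents the fibre topology from jumping — is the technical crux; everything else is then formal, and the identification $\Rn^2\cong F\times\Rn$ delivers connectedness of every level set.
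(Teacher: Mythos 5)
The paper offers no proof of this lemma at all --- it is imported verbatim from \cite[Lemma~3.4]{BO} --- so there is nothing internal to compare you against; I can only judge the proposal on its own terms. Your global strategy (no affine critical points $+$ no atypical values at infinity $\Rightarrow$ locally trivial, hence trivial, fibration over the image $\Rightarrow$ connected fibres) is a legitimate outline, and your implicit use of convenience is the right one: since every outer edge $\Delta_f^{\xi}$ has $l(\Delta_f,\xi)>0$, the constant term never reaches an outer face, so $(f-c)^{\xi}=f^{\xi}$ for all $c$ and the leading behaviour at infinity is independent of the level. (Two small repairs you would need anyway: a submersion $\Rn^2\to\Rn$ need not be surjective, so the fibration argument must be run over the open interval $f(\Rn^2)$, empty fibres being vacuous; and connectedness of each fibre is strictly weaker than what you set out to prove.)

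The genuine gap is that the step you label ``the technical crux'' \emph{is} the lemma, and you have not proved it. The assertion that simple, non-moving intersection points of $\overline{f^{-1}(c)}$ with the toric boundary divisor force the family to be topologically trivial near infinity is not a formal consequence of transversality: to conclude local triviality of $f$ you need a Thom--Mather isotopy argument for the compactified family (or an explicit $\rho$-regularity/M-tameness estimate), and in particular a separate analysis at the torus-fixed points of the compactification --- the vertices of $\Delta_f$, including those on the axes --- which the edge-by-edge non-degeneracy hypothesis does not directly control. What you are invoking is the real analogue of the Broughton/N\'emethi--Zaharia theorem (convenient $+$ Newton non-degenerate at infinity $\Rightarrow$ no bifurcation values beyond critical values); over $\Rn$ this is a nontrivial theorem in its own right, not a verification, so citing its conclusion as the crux and leaving it unargued is circular. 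A more economical route, closer to the machinery this paper actually leans on (cf.\ Lemma~\ref{L:hrc} and \cite{Gw2}), is to suppose some level set is disconnected, extract from the resulting foliation of $\Rn^2$ a half-Reeb component, and show that the behaviour of its non-compact separatrices at infinity forces a multiple factor of some $f|_E$ on an outer edge --- this proves exactly the connectedness statement without establishing global triviality of the fibration.
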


\begin{Lemma}\label{L:edge}
Let $f,g\in\Rn[x,y]$ be such that $\Delta_g\subset\Delta_f$.
Then for all $\mu\in\Rn$ but a finite number we have $\Delta_{f+\mu g}=\Delta_f$.
Moreover if for infinitely many values of $\mu$ the polynomial $f+\mu g$
is degenerated on some edge $E$ of $\Delta_f$, 
then there exists a polynomial $h\in\Rn[x,y]\setminus\Rn$ not divisible by $x$ and by $y$ such 
that $h^2$ is a factor of $f|_E$ and $g|_E$.
\end{Lemma}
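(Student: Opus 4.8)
The plan is to treat the two assertions separately, reducing the second one to a question about a pencil of univariate polynomials. For the first assertion I would note that $\supp(f+\mu g)\subseteq\supp f\cup\supp g\subseteq\Delta_f$, so $\Delta_{f+\mu g}\subseteq\Delta_f$ for every $\mu$. Each vertex $v$ of $\Delta_f$ lies in $\supp f$, so the coefficient of $f+\mu g$ at $v$ equals $a_v+\mu b_v$ with $a_v\neq0$, and this vanishes for at most one value of $\mu$. Since $\Delta_f$ has finitely many vertices, for all but finitely many $\mu$ every vertex of $\Delta_f$ survives in $\supp(f+\mu g)$, which together with the inclusion forces $\Delta_{f+\mu g}=\Delta_f$.

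For the second assertion, since $\Delta_f$ has finitely many edges I would first fix, by pigeonhole, a single edge $E$ that is degenerate for infinitely many $\mu$. If $g|_E=0$ the statement is immediate (take $h$ to be a repeated binomial factor of $f|_E$, which exists because $f+\mu g$ is then degenerate on $E$), so assume $g|_E\neq0$. Next I would linearize the edge: let $(p,q)$ be the primitive integer vector parallel to $E$ and $(i_0,j_0)$ an endpoint of $E$; setting $t=x^py^q$ one gets $f|_E=x^{i_0}y^{j_0}P(t)$ and $g|_E=x^{i_0}y^{j_0}Q(t)$ for nonzero univariate $P,Q\in\Rn[t]$, where $\deg P$ is the lattice length of $E$ and $P(0)\neq0$ because the endpoints of $E$ are vertices of $\Delta_f$. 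Since $(p,q)$ is primitive, the irreducible factors of a restriction other than $x$ and $y$ are exactly the binomials attached to the nonzero roots of the corresponding univariate polynomial; hence $f+\mu g$ is degenerated on $E$ precisely when $P+\mu Q$ has a repeated root. The hypothesis thus becomes: $P+\mu Q$ has a repeated root for infinitely many $\mu$.

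The heart of the argument is to deduce from this that $P$ and $Q$ share a double root. For a repeated root $r$ of $P+\mu Q$ one has $P(r)+\mu Q(r)=0$ and $P'(r)+\mu Q'(r)=0$; eliminating $\mu$ in the case $Q(r)\neq0$ shows that $r$ is a root of the Wronskian $W=P'Q-PQ'$, while $r$ then determines $\mu$ uniquely through $\mu=-P(r)/Q(r)$. Consequently, if $W\not\equiv0$ only finitely many of the parameters can have $Q(r)\neq0$, so infinitely many of them satisfy $P(r)=Q(r)=0$, i.e.\ $r$ is a common root of $P$ and $Q$. Because $P$ and $Q$ have only finitely many common roots, pigeonhole yields one value $r$ realized for two distinct parameters $\mu_1\neq\mu_2$, and subtracting the relations $P'(r)+\mu_iQ'(r)=0$ gives $Q'(r)=P'(r)=0$; hence $(t-r)^2$ divides both $P$ and $Q$, with $r\neq0$. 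In the remaining case $W\equiv0$ one gets $P=cQ$, and then any of the infinitely many admissible $\mu\neq-c$ exhibits a nonzero repeated root of $Q$, automatically shared by $P$.

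Finally I would translate the common nonzero double root $r$ back into a factor: the binomial attached to $r$ (a genuine polynomial once denominators are cleared, e.g.\ $x^p-ry^{-q}$ when $q<0$) is divisible neither by $x$ nor by $y$ since $r\neq0$, and its square divides both $f|_E$ and $g|_E$. If $r$ is real this binomial is the desired $h$; if $r$ is not real I would multiply it by the conjugate binomial, which also divides both restrictions, obtaining a real $h\in\Rn[x,y]\setminus\Rn$ whose square is a common factor. I expect the main obstacle to be the pencil step of the third paragraph, namely arguing cleanly that the repeated roots of the members must collapse onto common roots of $P$ and $Q$ and then extracting a genuine \emph{double} root via the pigeonhole count; by contrast, the reduction to $P,Q$ and the reality bookkeeping are routine once the dictionary between edge restrictions and univariate polynomials is in place.
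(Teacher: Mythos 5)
Your proof is correct and follows essentially the same route as the paper: both reduce the edge restrictions via a monomial substitution to a pencil $P+\mu Q$ of univariate polynomials with nonzero constant term, deduce that $P$ and $Q$ share a square factor, and transport that factor back to a polynomial $h$ not divisible by $x$ or $y$. The only real difference is that you justify the pivotal pencil step (infinitely many non-squarefree members force a common double root) explicitly, via the Wronskian $P'Q-PQ'$ and a pigeonhole count, whereas the paper asserts it in one sentence; your version of that step is sound, including the treatment of the cases $g|_E=0$, $W\equiv0$, and non-real roots.
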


\begin{proof}
The first part of the lemma is obvious since  $\supp(f+\mu g)\subset \supp(f)$ and 
a strong inclusion holds only for a finite number of values of $\mu$.

To prove the second part, we may assume, replacing if necessary $g$ by $f+\mu g$ that 
$\Delta_g=\Delta_f$ and that $E=\Delta_f^{\xi}$ where $\xi=(\xi_1,\xi_2)$ has co-prime integer 
coordinates.  
Let $\nu=(\nu_1,\nu_2)$ be a vector with integer coordinates such that  $\xi_1\nu_2-\xi_2\nu_1=1$. Let $\tilde f(u,v)=f|_E(u^{\xi_1}v^{\nu_1},u^{\xi_2}v^{\nu_2})$, 
$\tilde g(u,v)=g|_E(u^{\xi_1}v^{\nu_1},u^{\xi_2}v^{\nu_2})$. 
It is easy to check that 
$\tilde f=u^{l(\Delta_f,\xi)}v^{l(E,\nu)} F(1/v)$, 
$\tilde g=u^{l(\Delta_f,\xi)}v^{l(E,\nu)} G(1/v)$, 
where $F$, $G$ are univariate polynomials with nonzero constant terms. 
The degeneracy of $f+\mu g$ on $E$ implies that the polynomial $F+\mu G$
has a multiple factor. Since this holds for infinitely many values of $\mu$, the polynomials 
$F$ and $G$ have a common factor $H^2$. Then 
the polynomial $h=y^{\xi_1\deg H}H(x^{\xi_2}y^{-\xi_1})$ satisfies the conclusion of the lemma. 
\end{proof}

\begin{Lemma}[\protect{\cite[Corollary 4.2]{Gw2}}] \label{L:hrc}
Let $f,g\in\Rn[x,y]$. Assume that $\Delta_f$ has an outer edge that has endpoints 
$(1,0)$, $(a,b)$ with $a>1$, $b>0$ and has no other lattice points.  
Then $(f,g)$ is not a jacobian pair. 
\end{Lemma}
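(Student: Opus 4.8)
The plan is to read off the leading form of $f$ along the given edge and to contradict Lemma~\ref{L:leading}, in the spirit of Lemmas~\ref{L:znak0} and~\ref{L:wierzcholki}. First I would fix the weight vector. Since the edge $E$ joins $(1,0)$ and $(a,b)$ and contains no other lattice point, $\gcd(a-1,b)=1$ and the primitive outer normal is $\xi=(b,\,1-a)$; as $b>0$ this is indeed an outer edge. Because $x$ and $x^{a}y^{b}$ are the only monomials of $f$ lying on $E$, we get $f^{\xi}=c_{1}x+c_{2}x^{a}y^{b}=x\,(c_{1}+c_{2}t)$ with $c_{1}c_{2}\neq0$, where $t:=x^{a-1}y^{b}$ is a $\xi$-invariant monomial (its weighted degree is $0$, using $a>1$). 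Suppose, for contradiction, that $(f,g)$ is a jacobian pair. Replacing $g$ by $-g$ if necessary (Lemma~\ref{L:para}) we may assume $\Jac(f,g)>0$ everywhere, so Lemma~\ref{L:leading} gives $\Jac(f,g)^{\xi}\ge0$ on all of $\Rn^{2}$.

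Next I would reduce to leading forms. For any univariate $\phi$ one has $\Jac(f,g-\phi(f))=\Jac(f,g)$, and replacing $g$ by $g-\phi(f)$ changes neither $f$ nor $\xi$. A short computation with the quasi-homogeneous structure shows that $\Jac(f^{\xi},g^{\xi})\equiv0$ forces $g^{\xi}=c\,(f^{\xi})^{m}$ for some integer $m\ge0$ (a constant multiple of a power of $f^{\xi}$); subtracting $c\,f^{m}$ then strictly lowers $l(\Delta_{g},\xi)$. Iterating, this descent terminates: either $g$ is reduced to a constant, whence the original $g$ is a polynomial in $f$ and $\Jac(f,g)\equiv0$, contradicting $\Jac(f,g)>0$; or we reach a $g$ with $\Jac(f^{\xi},g^{\xi})\not\equiv0$. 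In the latter case $\Jac(f,g)^{\xi}=\Jac(f^{\xi},g^{\xi})$, so it remains to show that $\Jac(f^{\xi},g^{\xi})$ takes a negative value somewhere.

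For this I would straighten the edge by the unimodular monomial substitution used in the proof of Lemma~\ref{L:edge}. Writing the images of the leading forms as $u^{b}\,\Phi(v)$ and $u^{e}\,\Psi(v)$ with $e=l(\Delta_{g},\xi)$, the covariance of the Jacobian under this substitution turns $\Jac(f^{\xi},g^{\xi})$ into a monomial times the univariate expression $b\,\Phi\,\Psi'-e\,\Phi'\,\Psi$. Here $\Phi$ is the two-term reduction of $f^{\xi}$, that is, a linear factor times a monomial, so $\Phi$ has a nonzero root; this is exactly the mechanism by which the derivative $H'$ changes sign in Lemma~\ref{L:znak0}. One then checks that $b\,\Phi\,\Psi'-e\,\Phi'\,\Psi$ changes sign, and, by comparing the four open quadrants as in the parity argument of Lemma~\ref{L:wierzcholki}, that the monomial prefactor cannot repair this. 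Hence $\Jac(f^{\xi},g^{\xi})$, and with it $\Jac(f,g)^{\xi}$, is somewhere negative, contradicting $\Jac(f,g)^{\xi}\ge0$.

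The main obstacle is this last step: showing that the univariate factor $b\,\Phi\,\Psi'-e\,\Phi'\,\Psi$ genuinely changes sign, and that the accompanying monomial, whose parities are forced by looking across the quadrants, cannot cancel that sign change. Everything else — fixing $\xi$, the descent that disposes of the algebraically dependent case $\Jac(f^{\xi},g^{\xi})\equiv0$, and the covariance computation — is routine quasi-homogeneous bookkeeping.
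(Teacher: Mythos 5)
The paper does not prove Lemma~\ref{L:hrc} at all: it is imported verbatim from \cite[Corollary 4.2]{Gw2}, so your sketch can only be judged on its own terms, not against an internal argument.

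Judged so, the strategy (pass to $\xi$-leading forms, show $\Jac(f^{\xi},g^{\xi})$ changes sign, contradict Lemma~\ref{L:leading}) is the natural first attempt and is exactly the mechanism of Lemmas~\ref{L:znak0} and~\ref{L:znak}, but it cannot prove this statement: the step you yourself flag as ``the main obstacle'' is not merely unfinished, it is false in general. Take the edge with endpoints $(1,0)$ and $(2,1)$ (so $a=2$, $b=1$; this is precisely the $D_{50}$ configuration in which the lemma is invoked), with outer normal $\xi=(-1,1)$, so that $f^{\xi}=c_{1}x+c_{2}x^{2}y$ with $c_{1}c_{2}\neq0$. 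Nothing in the hypotheses prevents $\Delta_{g}^{\xi}=\{(1,0)\}$, e.g.\ $g=x+x^{3}+x^{5}y^{2}$; then $g^{\xi}=cx$, your $\Psi$ is a nonzero constant, and $\Jac(f^{\xi},g^{\xi})=-c\,c_{2}x^{2}$ is nonzero and of constant sign. Lemma~\ref{L:leading} then gives no contradiction (one only learns the sign of $cc_{2}$), and the argument terminates without a conclusion. So the lemma genuinely requires information beyond the leading form of the Jacobian --- \cite{Gw2} works instead with the real branches at infinity of the level curves $f=t$ created by this edge --- and no amount of care with the monomial prefactor and the four quadrants will rescue the leading-form computation.

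There are two further, independent gaps. First, your reduction to $\Jac(f^{\xi},g^{\xi})\neq0$: the claim that $\Jac(f^{\xi},g^{\xi})\equiv0$ forces $g^{\xi}=c(f^{\xi})^{m}$ is asserted rather than proved, and the ensuing descent need not terminate, because replacing $g$ by $g-cf^{m}$ lowers $l(\Delta_{g},\xi)$ while possibly raising $\deg g$ to $m\deg f$, so there is no lower bound for $l(\Delta_{g},\xi)$ that is uniform over the iteration; you cannot conclude after finitely many steps that either $g\in\Rn[f]$ or the leading Jacobian is nonzero. Second, a smaller slip: both $(b,1-a)$ and $(-b,a-1)$ have a positive coordinate, so which of them is the outer normal depends on which side of the edge $\Delta_{f}$ lies, and in every application of the lemma in this paper it is $(-b,a-1)$, not the $(b,1-a)$ you fix; the hypothesis allows either orientation and the proof must not silently choose one.
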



\section{Proof of Theorem~\ref{Tw:main}}

First we prove Theorem~\ref{Tw:main} under 
the assumption that $\deg g\leq 5$.

\medskip
Suppose that there exists an atypical jacobian pair $(f,g)$
such that $\deg f\leq 5$, $\deg g\leq 5$. 
By~\cite{BO} all jacobian pairs$(f,g)$ with $\deg f\leq 4$ are typical.  
Hence $\deg f=5$, $\deg g=5$. 
Let us denote by  $f^{+}$, $g^{+}$ the leading forms of polynomials $f$, $g$, that is the homogeneous
polynomials of degree 5 which are the symbolic restrictions of $f$, $g$ to the segment $E$ 
with endpoints $(0,5)$, $(5,0)$.  Let $D=\gcd(f^{+}, g^{+})$.   
Then $f_1=f^{+}/D$, $g_1=g^{+}/D$ are co-prime homogeneous polynomials of the same degree. 
By Bertini theorem, for all $\mu\in\Rn$ but a finite number a polynomial $f_1+\mu g_1$ 
is co-prime with $D$ and does not have multiple factors. 

Let $h$ be the product of multiple factors of $D$. It follows from above that replacing 
$(f,g)$ by $(f+\mu g, g)$ with generic $\mu$ we may assume that the polynomials 
$f^{+}/h$ and $h$ are co-prime and $f^{+}/h$ does not have multiple factors. 
Replacing  $(f,g)$ by $(f-f(0,0),g-g(0,0))$ we may assume that 
$f$ and $g$ do not have constant terms. Since $(f,g)$ is a jacobian pair,
$x,y\in \supp(f)\cup\supp(g)$ (otherwise $\Jac(f,g)(0,0)=0$).
Thus replacing once more $(f,g)$ by $(f+\mu_1g, g)$  with suitably chosen constant $\mu_1$,
we may additionally assume that:
\begin{itemize}
\item[(a)] $x,y\in\supp(f)$ and $f$ does not have a constant term,
\item[(b)] $\Delta_g\subset\Delta_f$.

\end{itemize}

Condition~(a) implies, that $f$ is convenient. 
It follows from Corollary~\ref{W:1}, Lemma~\ref{L:inf} 
and Lemma~\ref{L:edge} that for infinitely many $\mu\in\Rn$, including 
$\mu=0$, the polynomial $f+\mu g$ is degenerated on some outer edge 
of $\Delta_f$.

\medskip
The rest of the proof will be case by case analysis with respect to the degree of $h$: 

\medskip
\textbf{(I)} $\deg h=0$. 

Applying a linear change of coordinates we may assume that $x$, $y$ do not divide $f^{+}$. 
Then $\Delta_f$ has only one outer edge with endpoints $(0,5)$, $(5,0)$ and $f$ is non-degenerated 
on this edge. This implies that $(f,g)$ is a typical jacobian pair. We arrived at contradiction. 

\medskip
\textbf{(II)} $\deg h=2$.

Applying a linear change of coordinates we may assume that $h=x^2$
and $y$ does not divide $f^{+}$.
Then the Newton polygon $\Delta_f$ is included in $D_1$
and has an outer edge $E$ (marked in pictures $D_1$ -- $D_5$ in green)
with endpoints $(2,3)$, $(5,0)$. Assumption $\deg h=2$ implies that 
$f$ is non-degenerated on $E$, hence it must be 
degenerated on some other outer edge of its Newton polygon. 

The candidates for $\Delta_f$ are polygons $D_1$ -- $D_5$. 
\begin{figure}[h!]
\begin{tikzpicture}[scale = 0.3]
\szkielet
\foreach \position in {(5,0),(4,1),(3,2),(2,3),(0,4)} \draw \position circle (3pt);
    \draw[fill=black, opacity=0.1] (1,0)--(5,0)--(2,3)--(0,4)--(0,1)--cycle;
    \draw[thick] (0,4)--(0,1)--(1,0)--(5,0);
    \draw[thick, color=blue] (2,3)--(0,4);
    \draw[thick, color=green] (5,0)--(2,3);
    \node at (4,2.7) {$D_1$};   
 \end{tikzpicture}
\quad
\begin{tikzpicture}[scale = 0.3]
\szkielet
\foreach \position in {(5,0),(4,1),(3,2),(0,3),(1,3),(2,3)} \draw \position circle (3pt);
    \draw[fill=black, opacity=0.1] (1,0)--(5,0)--(2,3)--(0,3)--(0,1)--cycle;
    \draw[thick] (2,3)--(0,3)--(0,1)--(1,0)--(5,0);
     \draw[thick, color=green] (5,0)--(2,3);
    \node at (4,2.7) {$D_2$};   
 \end{tikzpicture} 
\quad
\begin{tikzpicture}[scale = 0.3]
\szkielet
\foreach \position in {(5,0),(4,1),(0,2),(3,2),(1,3),(2,3)} \draw \position circle (3pt);
    \draw[fill=black, opacity=0.1] (1,0)--(5,0)--(2,3)--(1,3)--(0,2)--(0,1)--cycle;
    \draw[thick] (0,2)--(0,1)--(1,0)--(5,0);
    \draw[thick, color=blue] (2,3)--(1,3)--(0,2);
    \draw[thick, color=green] (5,0)--(2,3);
    \node at (4,2.7) {$D_3$};   
 \end{tikzpicture}
\quad
\begin{tikzpicture}[scale = 0.3]
\szkielet
\foreach \position in {(5,0),(4,1),(0,2),(3,2),(2,3)} \draw \position circle (3pt);
    \draw[fill=black, opacity=0.1] (1,0)--(5,0)--(2,3)--(0,2)--(0,1)--cycle;
    \draw[thick] (0,2)--(0,1)--(1,0)--(5,0);
    \draw[thick, color=blue] (2,3)--(0,2);
    \draw[thick, color=green] (5,0)--(2,3);
    \node at (4,2.7) {$D_4$};   
 \end{tikzpicture}
\quad
\begin{tikzpicture}[scale = 0.3]
\szkielet
\foreach \position in {(5,0),(0,1),(4,1),(1,2),(3,2),(2,3)} \draw \position circle (3pt);
    \draw[fill=black, opacity=0.1] (1,0)--(5,0)--(2,3)--(0,1)--cycle;
    \draw[thick] (2,3)--(0,1)--(1,0)--(5,0);
    \draw[thick, color=green] (5,0)--(2,3);
    \node at (4,2.7) {$D_5$};   
 \end{tikzpicture}
\end{figure}

The possibilities  $D_1$, $D_3$, $D_4$ are excluded since the outer edges different from $E$
do not have interior lattice points which guarantees non-degeneracy. 
That then leave the cases $D_2$ and $D_5$.  If $\Delta_f=D_2$, 
then degeneracy holds on the outer horizontal edge $H$ of $D_2$. 
Therefore $f|_H$ has a form $a(x-b)^2y^3$. 
What is more $(x-b)^2$ divides $g|_H$
Then applying a substitution $x=\bar x +b$ we reduce $\Delta_f$ 
to $D_4$ or $D_5$ and we still can assume, replacing $f$ by $f+\mu g$ if necessary, 
that $\Delta_g\subset\Delta_f$.

Therefore it is enough to consider the case $\Delta_f=D_5$. 
Since there is non-degeneracy on the edge $E$, the polynomial $f$ is
degenerated on the second outer edge $H$ of~$D_5$. 
It follows from Lemma~\ref{L:edge} that $f|_H=ay(xy-b)^2$ 
for some nonzero $a$, $b$ and $g|_H=c f|_H$ for some constant $c$.
Then the symbolic restriction of the polynomial $g-cf$ to $H$ vanishes.  Hence using Lemma~\ref{L:para} and replacing $g$ by $g-cf$
we may assume that $\Delta_g\subset D_6$.  
It is easily seen, that $x\in \supp(g)$, since otherwise 
$(0,0)$ would be a critical point of $g$ 
which is impossible by Corollary~\ref{W:1}. 
If $xy\in\supp(g)$ then $g$ has a critical point $(0,c)$. 
Thus $xy\notin\supp(g)$.
If $x^2y^2\in\supp(g)$, then by Lemma~\ref{L:hrc}
$(f,g)$ is not a jacobian pair.  
If $x^2y^2\notin\supp(g)$, 
then by Lemma~\ref{L:znak} $\Jac(f,g)$ changes sign.  
Summing up $(f,g)$ cannot be a jacobian pair. 

\begin{figure}
\begin{tikzpicture}[scale = 0.3][h!]
\szkielet
    \draw[fill=black, opacity=0.1] (1,0)--(5,0)--(3,2)--(2,2)--(1,1)--cycle;
    \draw[thick] (1,0)--(5,0)--(3,2)--(2,2)--(1,1)--cycle;
    \node at (4,2.7) {$D_6$};   
 \end{tikzpicture}
\end{figure}

\newpage
\textbf{(III)} $\deg h=3$. 

Applying a linear change of coordinates we may assume that $h=x^3$
and $y$ does not divide $f^{+}$.
In the following figure are candidates $D_7$ -- $D_{15}$ for $\Delta_f$. 
The polynomial $f$ is non-degenerated on the edge $E$ marked in green.
The edges marked in blue do not have interior lattice points, 
hence on these edges there is also non-degeneracy. 
By Lemma~\ref{L:inf}, $\Delta_f$ can be only one of polygons 
$D_{12}$ or $D_{13}$ and $f$ is degenerated on the outer horizontal 
edge $H$. By Lemma~\ref{L:edge} the symbolic 
restrictions $f|_H$, $g|_H$ have a factor of a form $(x-a)^2$. 
Then after the substitution $x=\bar x +a$, the Newton polygon of 
a polynomial $f+\mu g$, for generic $\mu$ reduces to $D_{14}$ or $D_{15}$ 
but these possibilities are already excluded. 
\begin{figure}[h!]
\begin{tikzpicture}[scale = 0.3]
\szkielet
\foreach \position in {(5,0),(4,1),(3,2),(0,4)} \draw \position circle (3pt);
    \draw[fill=black, opacity=0.1] (1,0)--(5,0)--(3,2)--(0,4)--(0,1)--cycle;
    \draw[thick] (0,4)--(0,1)--(1,0)--(5,0);
    \draw[thick, color=blue] (3,2)--(0,4);
    \draw[thick, color=green] (5,0)--(3,2);
    \node at (4,2.7) {$D_7$};   
 \end{tikzpicture}
\quad
\begin{tikzpicture}[scale = 0.3]
\szkielet
\foreach \position in {(5,0),(4,1),(3,2),(0,3),(1,3)} \draw \position circle (3pt);
    \draw[fill=black, opacity=0.1] (1,0)--(5,0)--(3,2)--(1,3)--(0,3)--(0,1)--cycle;
    \draw[thick] (0,3)--(0,1)--(1,0)--(5,0);
    \draw[thick, color=blue] (3,2)--(1,3)--(0,3);
    \draw[thick, color=green] (5,0)--(3,2);
    \node at (4,2.7) {$D_8$};   
 \end{tikzpicture}
\quad
\begin{tikzpicture}[scale = 0.3]
\szkielet
\foreach \position in {(5,0),(4,1),(0,2),(3,2),(1,3)} \draw \position circle (3pt);
    \draw[fill=black, opacity=0.1] (1,0)--(5,0)--(3,2)--(1,3)--(0,2)--(0,1)--cycle;
    \draw[thick] (0,2)--(0,1)--(1,0)--(5,0);
    \draw[thick, color=blue] (3,2)--(1,3)--(0,2);
    \draw[thick, color=green] (5,0)--(3,2);
    \node at (4,2.7) {$D_9$};   
 \end{tikzpicture}
\quad
\begin{tikzpicture}[scale = 0.3]
\szkielet
\foreach \position in {(5,0),(0,1),(4,1),(3,2),(1,3)} \draw \position circle (3pt);
    \draw[fill=black, opacity=0.1] (1,0)--(5,0)--(3,2)--(1,3)--(0,1)--cycle;
    \draw[thick] (0,1)--(1,0)--(5,0);
    \draw[thick, color=blue] (3,2)--(1,3)--(0,1);
    \draw[thick, color=green] (5,0)--(3,2);
    \node at (4,2.7) {$D_{10}$};   
 \end{tikzpicture}
\quad
\begin{tikzpicture}[scale = 0.3]
\szkielet
\foreach \position in {(5,0),(4,1),(3,2),(0,3)} \draw \position circle (3pt);
    \draw[fill=black, opacity=0.1] (1,0)--(5,0)--(3,2)--(0,3)--(0,1)--cycle;
    \draw[thick] (0,3)--(0,1)--(1,0)--(5,0);
    \draw[thick, color=blue] (5,0)--(3,2)--(0,3);
    \draw[thick, color=green] (5,0)--(3,2);
    \node at (4,2.7) {$D_{11}$};   
 \end{tikzpicture}

\vspace{1ex}
\begin{tikzpicture}[scale = 0.3]
\szkielet
\foreach \position in {(5,0),(4,1),(0,2),(1,2),(2,2),(3,2)} \draw \position circle (3pt);
    \draw[fill=black, opacity=0.1] (1,0)--(5,0)--(3,2)--(0,2)--(0,1)--cycle;
    \draw[thick] (3,2)--(0,2)--(0,1)--(1,0)--(5,0);
    \draw[thick, color=green] (5,0)--(3,2);
    \node at (4,2.7) {$D_{12}$};   
 \end{tikzpicture}
\quad
\begin{tikzpicture}[scale = 0.3]
\szkielet
\foreach \position in {(5,0),(0,1),(4,1),(1,2),(2,2),(3,2)} \draw \position circle (3pt);
    \draw[fill=black, opacity=0.1] (1,0)--(5,0)--(3,2)--(1,2)--(0,1)--cycle;
    \draw[thick] (0,1)--(1,0)--(5,0);
    \draw[thick] (3,2)--(1,2);
    \draw[thick, color=green] (5,0)--(3,2);
    \draw[thick, color=blue] (1,2)--(0,1);
    \node at (4,2.7) {$D_{13}$};   
 \end{tikzpicture}
\quad
\begin{tikzpicture}[scale = 0.3]
\szkielet
\foreach \position in {(5,0),(0,1),(4,1),(2,2),(3,2)} \draw \position circle (3pt);
    \draw[fill=black, opacity=0.1] (1,0)--(5,0)--(3,2)--(2,2)--(0,1)--cycle;
    \draw[thick] (0,1)--(1,0)--(5,0);
    \draw[thick, color=blue] (3,2)--(2,2)--(0,1);
    \draw[thick, color=green] (5,0)--(3,2);
    \node at (4,2.7) {$D_{14}$};   
 \end{tikzpicture}
\quad
\begin{tikzpicture}[scale = 0.3]
\szkielet
\foreach \position in {(5,0),(0,1),(4,1),(3,2)} \draw \position circle (3pt);
    \draw[fill=black, opacity=0.1] (1,0)--(5,0)--(3,2)--(0,1)--cycle;
    \draw[thick] (0,1)--(1,0)--(5,0);
    \draw[thick, color=blue] (3,2)--(0,1);
    \draw[thick, color=green] (5,0)--(3,2);
    \node at (4,2.7) {$D_{15}$};   
 \end{tikzpicture}
\end{figure}

\bigskip
\textbf{(IV)} $\deg h=4$. 

Applying a linear change of coordinates we may assume that:
$h=x^4$, $h=x^2y^2$ or $h=h_1^2$,
where $h_1$ is a quadratic form irreducible in $\Rn[x,y]$.

\medskip\noindent
If $h=h_1^2$, then the projective closure of every curve 
$f(x,y)=t$, for $t\in\Rn$ intersects the real part of the line at infinity
at exactly one point with multiplicity 1.
It follows that a curve $f(x,y)=t$ has exactly one real branch at infinity and thus, since all connected components of $f(x,y)=t$ 
are unbounded, we get that $f(x,y)=t$ is connected. 
By Lemma~\ref{L:poziomica} the jacobian pair $(f,g)$ is typical.

\medskip\noindent
Now assume that $h=x^4$. Without loss of generality we may assume that 
$y$ does not divide $f^{+}$. Then $x^5\in\supp(f)$. 
The candidates for $\Delta_f$ are $D_{16}$ -- $D_{24}$. 
It is enough to examine only polygons $D_{20}$, $D_{21}$ and $D_{24}$ 
since in all remaining cases $f$ is non-degenerated on all outer 
edges of its Newton polygon. 

If $\Delta_f=D_{20}$, then $f$ is degenerated on the outer edge 
$H$ with endpoints $(0,3)$, $(4,1)$. 
Hence $f|_H$ has a form $ay(y-bx^2)^2$. 
Then the pair of polynomials 
$(\tilde f, \tilde g)=(f(x,y+b x^2),g(x,y+b x^2))$
is also an atypical jacobian pair. We have either ${\tilde f}^{+}=cx^5$, 
which is the case that we examine in~\textbf{(V)} 
or $\deg \tilde f\leq 4$ which is impossible in of view the results of~\cite{BO}.

If $\Delta_f=D_{21}$, then $f$ is degenerated on the outer horizontal 
edge. Then the suitable substitution of a form $x=\tilde x +a$ 
reduces $\Delta_f$ to $D_{22}$ or $D_{23}$ 
but these possibilities are already excluded. 

If $\Delta_f=D_{24}$, then $f$ is degenerated on the outer horizontal 
edge. Then after some substitution of a form $x=\tilde x+a$ the monomial $y$ no longer belongs to the supports of $f$ and $g$ which implies 
that $\Jac(f,g)(0,0)=0$.

 \begin{tikzpicture}[scale = 0.3]
\szkielet
\foreach \position in {(5,0),(4,1),(0,4)} \draw \position circle (3pt);
    \draw[fill=black, opacity=0.1] (1,0)--(5,0)--(4,1)--(0,4)--(0,1)--cycle;
    \draw[thick] (0,4)--(0,1)--(1,0)--(5,0);
    \draw[thick, color=blue] (5,0)--(4,1)--(0,4);
    \node at (4,2.7) {$D_{16}$};   
 \end{tikzpicture}
\quad
\begin{tikzpicture}[scale = 0.3]
\szkielet
\foreach \position in {(5,0),(4,1),(1,3),(0,3)} \draw \position circle (3pt);
    \draw[fill=black, opacity=0.1] (1,0)--(5,0)--(4,1)--(1,3)--(0,3)--(0,1)--cycle;
    \draw[thick] (0,3)--(0,1)--(1,0)--(5,0);
    \draw[thick, color=blue] (5,0)--(4,1)--(1,3)--(0,3);
    \node at (4,2.7) {$D_{17}$};   
 \end{tikzpicture}
\quad
\begin{tikzpicture}[scale = 0.3]
\szkielet
\foreach \position in {(5,0),(4,1),(0,2),(1,3)} \draw \position circle (3pt);
    \draw[fill=black, opacity=0.1] (1,0)--(5,0)--(4,1)--(1,3)--(0,2)--(0,1)--cycle;
    \draw[thick] (0,2)--(0,1)--(1,0)--(5,0);
    \draw[thick, color=blue] (5,0)--(4,1)--(1,3)--(0,2);
    \node at (4,2.7) {$D_{18}$};   
 \end{tikzpicture}
\quad
\begin{tikzpicture}[scale = 0.3]
\szkielet
\foreach \position in {(5,0),(0,1),(4,1),(1,3)} \draw \position circle (3pt);
    \draw[fill=black, opacity=0.1] (1,0)--(5,0)--(4,1)--(1,3)--(0,1)--(0,1)--cycle;
    \draw[thick] (0,1)--(0,1)--(1,0)--(5,0);
    \draw[thick, color=blue] (5,0)--(4,1)--(1,3)--(0,1);
    \node at (4,2.7) {$D_{19}$};   
 \end{tikzpicture}
\quad

\bigskip
\begin{tikzpicture}[scale = 0.3]
\szkielet
\foreach \position in {(5,0),(0,1),(4,1),(0,2),(2,2),(0,3)} \draw \position circle (3pt);
    \draw[fill=black, opacity=0.1] (1,0)--(5,0)--(4,1)--(0,3)--(0,1)--cycle;
    \draw[thick] (4,1)--(0,3)--(0,1)--(1,0)--(5,0);
    \draw[thick, color=blue] (5,0)--(4,1);
    \node at (4,2.7) {$D_{20}$};   
 \end{tikzpicture}
\quad
\begin{tikzpicture}[scale = 0.3]
\szkielet
\foreach \position in {(5,0),(0,1),(4,1),(0,2),(1,2),(2,2)} \draw \position circle (3pt);    
\draw[fill=black, opacity=0.1] (1,0)--(5,0)--(4,1)--(2,2)--(0,2)--(0,1)--cycle;
    \draw[thick] (2,2)--(0,2)--(0,1)--(1,0)--(5,0);
    \draw[thick, color=blue] (5,0)--(4,1)--(2,2);
    \node at (4,2.7) {$D_{21}$};   
 \end{tikzpicture}
\quad
\begin{tikzpicture}[scale = 0.3]
\szkielet
\foreach \position in {(5,0),(0,1),(4,1),(1,2),(2,2)} \draw \position circle (3pt);    
    \draw[fill=black, opacity=0.1] (1,0)--(5,0)--(4,1)--(2,2)--(1,2)--(0,1)--cycle;
    \draw[thick] (0,1)--(1,0)--(5,0);
    \draw[thick, color=blue] (5,0)--(4,1)--(2,2)--(1,2)--(0,1);
    \node at (4,2.7) {$D_{22}$};   
 \end{tikzpicture}
\quad
\begin{tikzpicture}[scale = 0.3]
\szkielet
\foreach \position in {(5,0),(0,1),(4,1),(2,2)} \draw \position circle (3pt);    
    \draw[fill=black, opacity=0.1] (1,0)--(5,0)--(4,1)--(2,2)--(0,1)--cycle;
    \draw[thick] (0,1)--(1,0)--(5,0);
    \draw[thick, color=blue] (5,0)--(4,1)--(2,2)--(0,1);
    \node at (4,2.7) {$D_{23}$};   
 \end{tikzpicture}
\quad

\bigskip
\begin{tikzpicture}[scale = 0.3]
\szkielet
\foreach \position in {(5,0),(0,1),(1,1),(2,1),(3,1),(4,1)} \draw \position circle (3pt);    
    \draw[fill=black, opacity=0.1] (1,0)--(5,0)--(4,1)--(0,1)--cycle;
    \draw[thick] (4,1)--(0,1)--(1,0)--(5,0);
    \draw[thick, color=blue] (5,0)--(4,1);
    \node at (4,2.7) {$D_{24}$};   
\end{tikzpicture}

\medskip\noindent
Finally assume that $h=x^2y^2$. The candidates for $\Delta_f$, 
up to symmetry of the first quadrant, are  $D_{25}$ -- $D_{36}$. 
If the Newton polygon of $f$ has an outer horizontal or vertical edge $E$
and $f$ is degenerated on $E$, then this edge vanishes after 
some substitution $x=\tilde x +a$ if $E$ is horizontal or 
$y=\tilde y +a$ if $E$ is vertical. 
Hence it is enough to consider cases where $\Delta_f$ is one of the 
polygons $D_{33}$ -- $D_{36}$ and $f$ is degenerated on the outer edge 
with endpoint $(0,1)$.  This part of the proof is left to the reader (see the case $\Delta_f=D_5$). 

\vspace{1cm}
\begin{tikzpicture}[scale = 0.3]
\szkielet
\foreach \position in {(4,0),(3,2),(2,3),(0,4)} \draw \position circle (3pt);
    \draw[fill=black, opacity=0.1] (0,1)--(1,0)--(4,0)--(3,2)--(2,3)--(0,4)--cycle;
    \draw[thick] (0,4)--(0,1)--(1,0)--(4,0);
    \draw[thick, color=blue] (4,0)--(3,2)--(2,3)--(0,4);
    \node at (4,2.7) {$D_{25}$};   
 \end{tikzpicture}
\quad
\begin{tikzpicture}[scale = 0.3]
\szkielet
\foreach \position in {(4,0),(3,2),(2,3),(0,3),(1,3)} \draw \position circle (3pt);
    \draw[fill=black, opacity=0.1] (0,1)--(1,0)--(4,0)--(3,2)--(2,3)--(0,3)--cycle;
    \draw[thick] (2,3)--(0,3)--(0,1)--(1,0)--(4,0);
    \draw[thick, color=blue] (4,0)--(3,2)--(2,3);
    \node at (4,2.7) {$D_{26}$};   
 \end{tikzpicture}
\quad
\begin{tikzpicture}[scale = 0.3]
\szkielet
\foreach \position in {(4,0),(3,2),(2,3),(1,3),(0,2)} \draw \position circle (3pt);
    \draw[fill=black, opacity=0.1] (0,1)--(1,0)--(4,0)--(3,2)--(2,3)--(1,3)--(0,2)--cycle;
    \draw[thick] (0,2)--(0,1)--(1,0)--(4,0);
    \draw[thick, color=blue] (4,0)--(3,2)--(2,3)--(1,3)--(0,2);
    \node at (4,2.7) {$D_{27}$};   
 \end{tikzpicture}
\quad
\begin{tikzpicture}[scale = 0.3]
\szkielet
\foreach \position in {(4,0),(3,2),(2,3),(0,2)} \draw \position circle (3pt);
    \draw[fill=black, opacity=0.1] (0,1)--(1,0)--(4,0)--(3,2)--(2,3)--(0,2)--cycle;
    \draw[thick] (0,2)--(0,1)--(1,0)--(4,0);
    \draw[thick, color=blue] (4,0)--(3,2)--(2,3)--(0,2);
    \node at (4,2.7) {$D_{28}$};   
 \end{tikzpicture}
\quad
\begin{tikzpicture}[scale = 0.3]
\szkielet
\foreach \position in {(4,0),(3,2),(2,3),(0,1),(1,2)} \draw \position circle (3pt);
    \draw[fill=black, opacity=0.1] (0,1)--(1,0)--(4,0)--(3,2)--(2,3)--cycle;
    \draw[thick] (2,3)--(0,1)--(1,0)--(4,0);
    \draw[thick, color=blue] (4,0)--(3,2)--(2,3);
    \node at (4,2.7) {$D_{29}$};   
 \end{tikzpicture}
\quad

\vspace{1cm}
\begin{tikzpicture}[scale = 0.3]
\szkiel
\foreach \position in {(3,0),(3,2),(2,3),(0,3),(1,3),(3,1)} \draw \position circle (3pt);
    \draw[fill=black, opacity=0.1] (0,1)--(1,0)--(3,0)--(3,2)--(2,3)--(0,3)--cycle;
    \draw[thick] (2,3)--(0,3)--(0,1)--(1,0)--(3,0)--(3,2);
    \draw[thick, color=blue] (3,2)--(2,3);
    \node at (4,2.7) {$D_{30}$};   
 \end{tikzpicture}
\quad
\begin{tikzpicture}[scale = 0.3]
\szkiel
\foreach \position in {(3,0),(3,2),(2,3),(0,2),(1,3),(3,1)} \draw \position circle (3pt);
    \draw[fill=black, opacity=0.1] (0,1)--(1,0)--(3,0)--(3,2)--(2,3)--(1,3)--(0,2)--cycle;
    \draw[thick] (0,2)--(0,1)--(1,0)--(3,0)--(3,2);
    \draw[thick, color=blue] (3,2)--(2,3)--(1,3)--(0,2);
    \node at (4,2.7) {$D_{31}$};   
 \end{tikzpicture}
\quad
\begin{tikzpicture}[scale = 0.3]
\szkiel
\foreach \position in {(3,0),(3,2),(2,3),(1,2),(3,1)} \draw \position circle (3pt);
    \draw[fill=black, opacity=0.1] (0,1)--(1,0)--(3,0)--(3,2)--(2,3)--cycle;
    \draw[thick] (2,3)--(0,1)--(1,0)--(3,0)--(3,2);
    \draw[thick, color=blue] (3,2)--(2,3);
    \node at (4,2.7) {$D_{33}$};   
 \end{tikzpicture}
\quad
\begin{tikzpicture}[scale = 0.3]
\szkiel
\foreach \position in {(2,0),(3,2),(2,3),(1,2),(3,1)} \draw \position circle (3pt);
    \draw[fill=black, opacity=0.1] (0,1)--(1,0)--(2,0)--(3,1)--(3,2)--(2,3)--cycle;
    \draw[thick] (2,3)--(0,1)--(1,0);
    \draw[thick, color=blue] (2,0)--(3,1)--(3,2)--(2,3);
    \node at (4,2.7) {$D_{34}$};   
 \end{tikzpicture}
\quad
\begin{tikzpicture}[scale = 0.3]
\szkiel
\foreach \position in {(2,0),(3,2),(2,3),(1,2)} \draw \position circle (3pt);
    \draw[fill=black, opacity=0.1] (0,1)--(1,0)--(2,0)--(3,2)--(2,3)--cycle;
    \draw[thick] (2,3)--(0,1)--(1,0);
    \draw[thick, color=blue] (2,0)--(3,2)--(2,3);
    \node at (4,2.7) {$D_{35}$};   
 \end{tikzpicture}
\quad
\begin{tikzpicture}[scale = 0.3]
\szkiel
\foreach \position in {(1,0),(2,1),(3,2),(2,3),(1,2)} \draw \position circle (3pt);
    \draw[fill=black, opacity=0.1] (0,1)--(1,0)--(3,2)--(2,3)--cycle;
    \draw[thick] (2,3)--(0,1)--(1,0)--(3,2)--(2,3);
    \draw[thick, color=blue] (3,2)--(2,3);
    \node at (4,2.7) {$D_{36}$};   
 \end{tikzpicture}
\quad

\bigskip
\textbf{(V)} $\deg h=5$. 
In this case we have $g=cf^{+}+\mbox{terms of degree~$\leq 4$}$. 
Then $(g-cf,f)$ is an atypical jacobian pair with first component of degree at most 4 
which is impossible in view of the result of Braun and Or\'efice-Okamoto~\cite{BO}.


\medskip
In order to complete the proof it is enough to show the following results.

\begin{Theorem}\label{Tw:deg}
Every jacobian pair $(f,g)$ such that 
$f=x^5+\mbox{terms of degree $\leq 4$}$
is typical. 
\end{Theorem}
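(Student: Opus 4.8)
The plan is to reduce, via Lemma~\ref{L:poziomica}, to showing that every level set of $f$ is connected. Since $(f,g)$ is a jacobian pair, $\Jac(f,g)$ never vanishes, so in particular $\nabla f\neq 0$ everywhere and each level set $C_t=f^{-1}(t)$ is a smoothly and properly embedded $1$-manifold. No component of $C_t$ can be compact: a circle would bound a disc carrying a local extremum of $f$, contradicting $\nabla f\neq 0$. Hence every component is a properly embedded line with exactly two ends, and the number of connected components equals half the number of ends.

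The next step is to count the ends using that $f$ is monic of degree $5$ in $x$. For every $y$ the polynomial $x\mapsto f(x,y)-t$ is monic of odd degree, hence has an odd number of real roots; the leading form of $f$ being $x^5$, the projective closure of $C_t$ meets the line at infinity only at $[0:1:0]$, so every end of $C_t$ escapes in the direction $y\to+\infty$ or $y\to-\infty$. Since the constant term does not enter any symbolic restriction $f^{\xi}$ with $\xi$ having a positive coordinate, the number of real roots of $x\mapsto f(x,y)-t$ stabilises, for $|y|$ large, to an odd number $m_{+}$ (resp.\ $m_{-}$) that is independent of $t$, and these roots depend continuously on $y$. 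Thus $C_t$ has $m_{+}$ ends going up and $m_{-}$ ends going down, so it has $(m_{+}+m_{-})/2$ components and is connected precisely when $m_{+}=m_{-}=1$.

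It therefore remains to exclude $m_{\pm}\geq 3$, and this is the step I expect to be the crux. The numbers $m_{\pm}$ are governed by the outer edges $E=\Delta_f^{\xi}$ of $\Delta_f$ with $\xi_2>0$ emanating from the vertex $(5,0)$: each real branch of the curve $f^{\xi}=0$ contributes one end. On such an edge, whenever $\Jac(f^{\xi},g^{\xi})\not\equiv 0$ we have $\Jac(f,g)^{\xi}=\Jac(f^{\xi},g^{\xi})$, so Lemma~\ref{L:leading} applied to $\pm\Jac(f,g)$ forces this quasi-homogeneous polynomial not to change sign. The main work is then a quasi-homogeneous sign lemma generalising Lemma~\ref{L:znak0}: after the monomial substitution used in the proof of Lemma~\ref{L:edge}, which turns $f^{\xi}$ and $g^{\xi}$ into univariate polynomials, a Rolle-type argument should show that three or more distinct real branches of $f^{\xi}$ force $\Jac(f^{\xi},g^{\xi})$ to change sign, a contradiction. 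Proving this sign statement, and organising the edges so that the total count is forced down to $m_{+}=m_{-}=1$, is where the real difficulty lies.

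Finally, the degenerate subcase $\Jac(f^{\xi},g^{\xi})\equiv 0$ must be handled separately. Here $g^{\xi}$ is proportional to $f^{\xi}$, so replacing $g$ by $g-cf$ shrinks the support of $g$ on $E$; iterating and invoking Lemma~\ref{L:edge} produces a square factor common to $f|_E$ and $g|_E$, from which either Lemma~\ref{L:znak} yields a sign change of $\Jac(f,g)$, or a coordinate change of the type used in the proof of Theorem~\ref{Tw:main} lowers $\deg f$ to at most $4$ and the result of Braun and Or\'efice-Okamoto~\cite{BO} applies. Combining all cases gives $m_{+}=m_{-}=1$, whence every level set of $f$ is connected and $(f,g)$ is typical.
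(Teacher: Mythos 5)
Your reduction via Lemma~\ref{L:poziomica} to the connectivity of the level sets of $f$, and the observation that each level set is a disjoint union of properly embedded lines all of whose ends escape to $y\to\pm\infty$ (because the leading form is $x^5$), are sound. But the argument has a genuine gap exactly where you yourself place it: the exclusion of $m_{\pm}\geq 3$ is never carried out. You defer it to an unproven ``quasi-homogeneous sign lemma'' asserting that three or more distinct real branches of $f^{\xi}$ force $\Jac(f^{\xi},g^{\xi})$ to change sign, and this is not the right mechanism. For instance $f^{\xi}=x(x^{2}-\alpha y)(x^{2}-\beta y)$ with distinct $\alpha,\beta>0$ has three distinct real branches and is non-degenerated on its edge; such configurations are ruled out by Lemma~\ref{L:inf} (quoted from \cite{BO}) using only that $f$ is a submersion, with no sign condition involving $g$ on that edge at all. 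So the obstruction you need is not a sign phenomenon of the pair on a single edge, and your plan never invokes the one lemma that actually closes the non-degenerate case. There are also smaller unjustified assertions: that $m_{\pm}$ is independent of $t$, and that each real branch of $f^{\xi}=0$ contributes exactly one end (a branch $x^{2}=\alpha y$ contributes two real roots for large $y$).

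For comparison, the paper's proof is organised differently: after a translation making the constant term of $f$ nonzero and eliminating outer edges of positive slope, it enumerates the possible Newton polygons $D_{37}$--$D_{46}$; in most of them every outer edge has no interior lattice points, so $f$ is non-degenerated there and Lemma~\ref{L:inf} gives connectivity of all level sets at once; the three remaining polygons are disposed of by explicit substitutions $x=\tilde x+a$ or $y\mapsto y+bx^{2}$ leading either to a critical point, to Lemma~\ref{L:hrc}, or to $\deg f\leq 4$ and the result of \cite{BO}. Your final paragraph gestures at this degenerate analysis, but without the Newton-polygon enumeration and without Lemma~\ref{L:inf} the non-degenerate case remains open, so the proposal as written does not prove the theorem.
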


\begin{proof}
Assume that there exists an atypical jacobian pair $(f,g)$ 
satisfying hypothesis of the theorem. Then the pair of polynomials 
$(\tilde f,\tilde g)=(f(x-a,y)+b,g(x-a,y))$ for any $a,b\in\Rn$ 
is also an atypical jacobian pair. Thus, without loss of generality
we may assume, 
replacing $(f,g)$ by $(\tilde f,\tilde g)$ if necessary, 
that $f$ has a nonzero constant term
and $\Delta_f$ does not have an outer edge of positive slope. 

\medskip
The candidates for $\Delta_f$ are polygons $D_{37}$ -- $D_{46}$. 

\begin{figure}[h!]
\begin{tikzpicture}[scale = 0.3]
\szkielet
\foreach \position in {(5,0),(0,4)} \draw \position circle (3pt);
    \draw[fill=black, opacity=0.1] (5,0)--(0,4)--(0,0)--cycle;
    \draw[thick] (5,0)--(0,4)--(0,0)--(5,0);
    \draw[thick, color=blue] (5,0)--(0,4);
    \node at (4,2.7) {$D_{37}$};   
 \end{tikzpicture}
\quad
\begin{tikzpicture}[scale = 0.3]
\szkielet
\foreach \position in {(5,0),(1,3),(0,3)} \draw \position circle (3pt);
    \draw[fill=black, opacity=0.1] (5,0)--(1,3)--(0,3)--(0,0)--cycle;
    \draw[thick] (0,3)--(0,0)--(5,0);
    \draw[thick, color=blue] (5,0)--(1,3)--(0,3);
    \node at (4,2.7) {$D_{38}$};   
 \end{tikzpicture} 
\quad
\begin{tikzpicture}[scale = 0.3]
\szkielet
\foreach \position in {(5,0),(2,2),(0,3)} \draw \position circle (3pt);
    \draw[fill=black, opacity=0.1] (5,0)--(2,2)--(0,3)--(0,0)--cycle;
    \draw[thick] (0,3)--(0,0)--(5,0);
    \draw[thick, color=blue] (5,0)--(2,2)--(0,3);
    \node at (4,2.7) {$D_{39}$};   
 \end{tikzpicture} 
\quad
\begin{tikzpicture}[scale = 0.3]
\szkielet
\foreach \position in {(5,0),(2,2),(0,2)} \draw \position circle (3pt);
    \draw[fill=black, opacity=0.1] (5,0)--(2,2)--(0,2)--(0,0)--cycle;
    \draw[thick] (0,2)--(0,0)--(5,0);
    \draw[thick, color=blue] (5,0)--(2,2);
        \draw[thick, color=red] (2,2)--(0,2);
    \node at (4,2.7) {$D_{40}$};   
 \end{tikzpicture} 
\quad
\begin{tikzpicture}[scale = 0.3]
\szkielet
\foreach \position in {(5,0),(1,2),(0,2)} \draw \position circle (3pt);
    \draw[fill=black, opacity=0.1] (5,0)--(1,2)--(0,2)--(0,0)--cycle;
    \draw[thick] (0,2)--(0,0)--(5,0);
    \draw[thick, color=blue] (1,2)--(0,2);
    \draw[thick, color=red] (5,0)--(1,2);
    \node at (4,2.7) {$D_{41}$};   
 \end{tikzpicture} 

\vspace{1ex}
\begin{tikzpicture}[scale = 0.3]
\szkielet
\foreach \position in {(5,0),(0,2)} \draw \position circle (3pt);
    \draw[fill=black, opacity=0.1] (5,0)--(0,2)--(0,0)--cycle;
    \draw[thick] (0,2)--(0,0)--(5,0);
    \draw[thick, color=blue] (5,0)--(0,2);
    \node at (4,2.7) {$D_{42}$};   
\end{tikzpicture}     
\quad
\begin{tikzpicture}[scale = 0.3]
\szkielet
\foreach \position in {(5,0),(2,1),(0,1)} \draw \position circle (3pt);
    \draw[fill=black, opacity=0.1] (5,0)--(2,1)--(0,1)--(0,0)--cycle;
    \draw[thick] (0,1)--(0,0)--(5,0);
    \draw[thick, color=blue] (5,0)--(2,1);
     \draw[thick, color=red] (2,1)--(0,1);
    \node at (4,2.7) {$D_{43}$};   
\end{tikzpicture} 
\quad
\begin{tikzpicture}[scale = 0.3]
\szkielet
\foreach \position in {(5,0),(1,1),(0,1)} \draw \position circle (3pt);
    \draw[fill=black, opacity=0.1] (5,0)--(1,1)--(0,1)--(0,0)--cycle;
    \draw[thick] (0,1)--(0,0)--(5,0);
    \draw[thick, color=blue] (5,0)--(1,1)--(0,1);
    \node at (4,2.7) {$D_{44}$};   
\end{tikzpicture} 
\quad
\begin{tikzpicture}[scale = 0.3]
\szkielet
\foreach \position in {(5,0),(0,1)} \draw \position circle (3pt);
    \draw[fill=black, opacity=0.1] (5,0)--(0,1)--(0,0)--cycle;
    \draw[thick] (0,1)--(0,0)--(5,0);
    \draw[thick, color=blue] (5,0)--(0,1);
    \node at (4,2.7) {$D_{45}$};   
\end{tikzpicture} 
\quad
\begin{tikzpicture}[scale = 0.3]
\szkielet
\foreach \position in {(5,0),(0,0)} \draw \position circle (3pt);
    \draw[thick, color=red] (5,0)--(0,0);
    \node at (4,2.7) {$D_{46}$};   
\end{tikzpicture} 
\end{figure}

In the above figures the outer edges without  interior lattice points are marked in blue.

If $\Delta_f$ is one of polygons $D_i$ for $i\in\{37,38,39,42,44,46\}$, 
then $f$ is non-degenerated on each outer edge of $\Delta_f$. Then by 
Lemma~\ref{L:inf} and Lemma~\ref{L:poziomica} $(f,g)$ is a typical jacobian pair. 

If $\Delta_f=D_{40}$, then $f$ is degenerated on the horizontal outer edge $E$ marked in red. 
The symbolic restriction $f|_E$ has a factor of a form $(x-a)^2$.  Then after the substitution 
$x=\tilde x +a$ and after subtracting the constant term the Newton polygon 
$\Delta_f$ reduces to $D_{48}$ or $D_{49}$.  If $\Delta_f=D_{48}$ then $f$ has a critical 
point of a form $(0,c)$. If If $\Delta_f=D_{49}$ then $(f,g)$ satisfies the 
assumptions of Lemma~\ref{L:hrc}. Hence in both cases $(f,g)$ is not a jacobian pair.

If $\Delta_f=D_{41}$, then $f$ is degenerated on the outer edge $E$ marked in red. 
We have $f|_E=ax(y-bx^2)^2$ for some nonzero constants $a$, $b$. 
Then the pair of polynomials $(\tilde f, \tilde g)=(f(x,y+b x^2),g(x,y+b x^2))$
is an atypical jacobian pair such that  $\deg \tilde f= 4$ which is impossible 
in of view the results of~\cite{BO}.

If $\Delta_f=D_{43}$, then $f$ is degenerated on the horizontal outer edge $E$ marked in red. 
Then after some substitution of the form $x=\tilde x +a$ and after subtracting the constant term the Newton polygon $\Delta_f$ reduces to $D_{50}$. By Lemma~\ref{L:hrc} $(f,g)$ 
is not a jacobian pair.

\begin{figure}[h!]
\begin{tikzpicture}[scale = 0.3]
\szkielet
\foreach \position in {(5,0),(2,2),(1,1),(1,0)} \draw \position circle (3pt);
    \draw[fill=black, opacity=0.1] (5,0)--(2,2)--(1,1)--(1,0)--cycle;
    \draw[thick] (5,0)--(2,2)--(1,1)--(1,0)--(5,0);
    \node at (4,2.7) {$D_{48}$};   
 \end{tikzpicture} 
\quad
\begin{tikzpicture}[scale = 0.3]
\szkielet
\foreach \position in {(5,0),(2,2),(1,0)} \draw \position circle (3pt);
    \draw[fill=black, opacity=0.1] (5,0)--(2,2)--(1,0)--cycle;
    \draw[thick] (5,0)--(2,2)--(1,0)--(5,0);
    \node at (4,2.7) {$D_{49}$};   
 \end{tikzpicture} 
\quad
\begin{tikzpicture}[scale = 0.3]
\szkielet
\foreach \position in {(5,0),(2,1),(1,0)} \draw \position circle (3pt);
    \draw[fill=black, opacity=0.1] (5,0)--(2,1)--(1,0)--cycle;
    \draw[thick] (5,0)--(2,1)--(1,0)--(5,0);
    \node at (4,2.7) {$D_{50}$};   
\end{tikzpicture} 
\end{figure}

If $\Delta_f=D_{46}$, then $f$ depends only on variable $x$. Since $f$ does not have critical points,
the level sets $f^{-1}(t)$ are vertical lines,  in particular they are connected. 
It follows from Lemma~\ref{L:poziomica} that $(f,g)$ is a typical jacobian par. 

We checked all possible cases. Hence $(f,g)$ cannot be an atypical jacobian pair. 
\end{proof}

\begin{Theorem}\label{Tw:st6}
Every jacobian pair $(f,g)$ such that $\deg f=5$, $\deg g=6$
is typical. 
\end{Theorem}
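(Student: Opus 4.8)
The plan is to reduce the problem to Theorem~\ref{Tw:deg} by proving that the leading form of $f$ must be the fifth power of a linear form, and the whole argument hinges on a parity observation made possible by $\deg f+\deg g$ being odd. Suppose, aiming at a contradiction, that $(f,g)$ is an atypical jacobian pair with $\deg f=5$ and $\deg g=6$. Replacing $(f,g)$ by $(f,-g)$ if necessary (which preserves both the jacobian-pair property and typicality, by Lemma~\ref{L:para}), I would assume $\Jac(f,g)>0$ on all of $\Rn^2$.

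The decisive step is to look at the top total-degree part of the jacobian. Let $f^{+}$, $g^{+}$ denote the leading forms of $f$, $g$, homogeneous of degrees $5$ and $6$. Taking $\xi=(1,1)$ one has $f^{\xi}=f^{+}$, $g^{\xi}=g^{+}$, and the highest total-degree part of $\Jac(f,g)$ equals $\Jac(f^{+},g^{+})$, which is homogeneous of degree $(5-1)+(6-1)=9$ whenever it is nonzero. Since $\Jac(f,g)\geq0$ everywhere, Lemma~\ref{L:leading} gives $\Jac(f,g)^{\xi}\geq0$. But a nonzero homogeneous polynomial $H$ of odd degree in two real variables satisfies $H(-x,-y)=-H(x,y)$ and hence takes both signs. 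Therefore $\Jac(f^{+},g^{+})$ cannot be a nonzero form of degree $9$, and we conclude $\Jac(f^{+},g^{+})\equiv0$.

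Next I would convert this vanishing into structural information about $f^{+}$. Writing $\phi(t)=f^{+}(1,t)$ and $\psi(t)=g^{+}(1,t)$, a direct computation gives $\Jac(f^{+},g^{+})=x^{9}\,(5\phi\psi'-6\phi'\psi)$, so its vanishing is equivalent to $(\phi^{6}/\psi^{5})'=0$, that is, to $(f^{+})^{6}=c\,(g^{+})^{5}$ for some constant $c\neq0$. Comparing factorizations and using $\gcd(5,6)=1$, every irreducible factor of $f^{+}$ must occur with multiplicity divisible by $5$; since $\deg f^{+}=5$ this forces $f^{+}=a\,\ell^{5}$ for a linear form $\ell$ and a constant $a\neq0$.

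Finally, by Lemma~\ref{L:para} I would apply an invertible linear change of coordinates carrying $\ell$ to $x$, together with a scaling of the first component, to arrange $f=x^{5}+\mbox{(terms of degree $\leq4$)}$. Theorem~\ref{Tw:deg} then guarantees that $(f,g)$ is typical, contradicting the assumption that it is atypical; this completes the proof. The only genuine content is the parity step of the second paragraph, and it is worth stressing that it succeeds exactly because $\deg f+\deg g=11$ is odd, which is precisely why the earlier equal-degree situation ($\deg f=\deg g=5$) resisted this shortcut and demanded the full Newton-polygon case analysis. The passage from $\Jac(f^{+},g^{+})\equiv0$ to the fifth power is the single computation involved and is entirely routine.
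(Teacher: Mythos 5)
Your proof is correct, but it takes a genuinely different route from the paper's. The paper also lives off the parity of $\deg f+\deg g=11$, but it channels it through Lemma~\ref{L:wierzcholki}: after a linear change making $x$ a factor of $f^{+}$ (with $x^k\,\|\,f^{+}$), it picks the weight $\xi=(n,n+1)$ with $n$ large, which isolates single vertices $\alpha=(k,5-k)$ of $\Delta_f$ and $\beta=(l,6-l)$ of $\Delta_g$; since the coordinates of $\alpha+\beta$ sum to $11$, one of them is even, so $\Jac(f,g)$ changes sign unless $k=5$, which is precisely the case sent to Theorem~\ref{Tw:deg}. You instead use the standard weight $\xi=(1,1)$: the oddness of $9=5+6-2$ together with Lemma~\ref{L:leading} forces $\Jac(f^{+},g^{+})\equiv0$, and the identity $\Jac(f^{+},g^{+})=x^{9}(5\phi\psi'-6\phi'\psi)$ then gives $(f^{+})^{6}=c\,(g^{+})^{5}$, whence $f^{+}=a\,\ell^{5}$ by unique factorization and $\gcd(5,6)=1$, again landing in Theorem~\ref{Tw:deg} after a linear change (Lemma~\ref{L:para}). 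The trade-off is that the paper's choice of $\xi$ avoids any degenerate case by always isolating vertices, whereas your choice must handle the vanishing of $\Jac(f^{+},g^{+})$ explicitly; you do so correctly, and your version has the advantage of extracting the precise structural conclusion $f^{+}=a\,\ell^{5}$ rather than merely a sign-change contradiction. I see no gap: every step (the sign change of a nonzero odd-degree homogeneous form, the application of Lemma~\ref{L:leading} to the everywhere-positive Jacobian, and the final normalization) is justified.
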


\begin{proof} 
Let $(f,g)$ be a jacobian pair satisfying the assumptions of the theorem. 
Since $f^{+}$ has an odd degree, it has a linear factor. Hence applying 
a linear change of coordinates we may assume that $x$ is a factor $f^{+}$. 
Let $k$ be the biggest integer such that $x^k$ divides $f^{+}$.

If $k=5$ then by Theorem~\ref{Tw:deg} $(f,g)$ is a typical jacabian pair. 

If $k\in\{1,2,3,4\}$, then $\alpha=(k,5-k)$, 
is a vertex of $\Delta_f$ and 
for every vector $\xi$ of the form $(n,n+1)$ with $n\geq 5$
we have $\Delta_f^{\xi}=\{\alpha\}$. 
Let $l=\min\{i: x^iy^{6-i}\in\supp(g)\}$. The point $\beta=(l,6-l)$ is 
a vertex of $\Delta_g$. 
Moreover for some vector $\xi=(n,n+1)$ with $n\geq 5$ we have   
$\Delta_g^{\xi}=\{\beta\}$. Since $\alpha+\beta=(k+l,11-k-l)$, we get 
that one coordinate of $\alpha+\beta$ is even. 
Then by Lemma~\ref{L:wierzcholki} the Jacobi determinant $\Jac(f,g)$
changes sign. Hence this case is impossible. 
\end{proof}

\end{document}